\numberwithin{equation}{section}
\theoremstyle{plain}
\newtheorem{theorem}[equation]{Theorem}
\newtheorem{lemma}[equation]{Lemma}
\newtheorem{proposition}[equation]{Proposition}
\newtheorem{corollary}[equation]{Corollary}
\theoremstyle{definition}
\newtheorem{definition}[equation]{Definition}
\newtheorem{example}[equation]{Example}
\newtheorem{remark}[equation]{Remark}
\newcommand*{\NN}{\mathbb{N}}
\newcommand*{\ZZ}{\mathbb{Z}}
\newcommand*{\RR}{\mathbb{R}}
\newcommand*{\R}{\mathbb{R}}
\newcommand*{\bcal}{{\mathcal B}}
\newcommand*{\ccal}{{\mathcal C}}
\newcommand*{\hcal}{\mathcal{H}}
\newcommand*{\wcal}{\mathcal{W}}
\newcommand*{\eps}{\varepsilon}
\newcommand*{\Om}{\Omega}
\renewcommand*{\phi}{\varphi}
\newcommand*{\Be}{B_{1,1}}
\newcommand*{\dOm}{\partial\Omega}
\newcommand*{\dd}{{\mathrm d}}
\newcommand*{\loc}{{\mathrm{loc}}}
\DeclareMathOperator{\dist}{dist}
\DeclareMathOperator{\rad}{rad}
\DeclareMathOperator{\Lip}{Lip}
\DeclareMathOperator{\LIP}{LIP}
\DeclareMathOperator{\diam}{diam}
\DeclareMathOperator{\Ext}{Ext}
\begin{document}
\title{Trace and extension theorems for functions of bounded variation
\thanks{{\bf 2010 Mathematics Subject Classification}:
Primary 46E35; Secondary 26A45, 26B30, 30L99, 31E05.\newline%
{\it Keywords\,}: Besov, BV, metric measure space,
co-dimension $1$ Hausdorff measure, trace, extension, Whitney cover.}}
\author{Luk\'{a}\v{s} Mal\'{y}\and Nageswari Shanmugalingam\and Marie Snipes}
\date{July 6, 2016}
\maketitle
\begin{abstract}
In this paper we show that every $L^1$-integrable function on $\dOm$ can be
obtained as the trace of a function of bounded variation in $\Om$ whenever
$\Om$ is a domain with regular boundary $\dOm$ in a doubling metric measure
space. In particular, the trace class of $BV(\Om)$ is $L^1(\dOm)$ provided
that $\Om$ supports a $1$-Poincar\'e inequality. We also construct a bounded
 linear extension from a Besov class of functions on $\dOm$ to $BV(\Om)$.
\end{abstract}
\section{Overview}
In Dirichlet boundary value problems in analysis, one prescribes the trace
value of the solution at the boundary of the domain. Given a domain $\Omega$,
it is therefore natural to ask what class of functions on the boundary can be
realized as the traces of functions of specified regularity on the domain.

The model problem that motivates our study is the problem of finding least
gradient functions from the class of functions of bounded variation (BV),
with prescribed boundary data, see \cite{AnzGia,Giu, M-Sr, SWZ}. Therefore the
regularity of the extended function inside the domain is BV regularity.

The paper~\cite{AnzGia} first studied the trace and extension problem for
functions of bounded variation in Euclidean Lipschitz domains. It was shown
there that the trace functions of BV functions on the domain lie in the
$L^1$-class of the boundary. In contrast, the work~\cite{Gag} demonstrated that
every $L^1$-function on the boundary of a Euclidean half-space (and hence
boundaries of Lipschitz domains) has a BV extension to the half-space.
Together, these two results indicate that the trace class of BV functions on
a Euclidean Lipschitz domain is the $L^1$-class of its boundary.

In the metric setting, a version of the Dirichlet problem associated with BV
functions was considered in~\cite{GMS1, GMS2, GMS3, HaKiLa}, but their notion
of trace required that the BV function be defined on a larger domain.
In~\cite{LS} this requirement was dispensed with for domains whose boundaries
are more regular (Euclidean Lipschitz domains satisfy this regularity
condition). In~\cite{LS} it was shown that if in addition the domain supports a
$1$-Poincar\'e inequality, then the trace of a BV function on the domain lies
in a suitable $L^1$-class of the boundary, thus providing an analog of the
results of~\cite{AnzGia} in the metric setting. The recent work~\cite{Vit} gave
an analog of the extension result of~\cite{Gag} for Lipschitz domains in
Carnot--Carath\'eodory spaces, which indicated that it is possible to identify
the trace class of BV functions in more general metric measure spaces. The goal
of this paper is to provide such an identification, by adapting the technique
of~\cite{Gag} to the metric setting.

In this paper $\Om$ denotes a bounded domain in a metric measure space
$(X,d,\mu)$. The natural measure on $\Om$ is the restriction of $\mu$ to $\Om$.
The measure we consider on the boundary $\dOm$ is the co-dimension $1$
Hausdorff measure $\mathcal{H}:=\mathcal{H}\vert_{\dOm}$
(see~\eqref{eq:deff-mathcal} below). The function spaces related to $\dOm$ will
have norms computed using the measure $\mathcal{H}$, and this being understood,
we will not explicitly mention the measure in the notation representing these
function spaces.

We now state the two main theorems of this paper. In what follows,
$T:BV(\Om)\to\mathcal{F}$, where $\mathcal{F}$ is the collection of all Borel
functions on $\dOm$, is the trace operator as constructed in~\cite{LS},
see~\eqref{defoftrace}. In the event that $\Om$ does not support a
$1$-Poincar\'e inequality, the trace need not be defined for each function in
$BV(\Om)$, but it would still be well-defined in the sense
of~\eqref{defoftrace} for certain functions in $BV(\Om)$. Thus in the next two
theorems, by stating that $T\circ E$ is the identity operator on the
corresponding function space, we are also implicitly claiming that for each $u$
in that function space the trace of $Eu$ is well-defined.
\begin{theorem}\label{thm:main1}
Let $\Om$ be a bounded domain in $X$ satisfying the co-dimension~$1$ Ahlfors
regularity~\eqref{boundary-Ahlfors-regularity}. Then there is a bounded linear
extension operator $E:\Be^0(\dOm)\to BV(\Om)$ such that $T\circ E$ is the
identity operator on $\Be^0(\dOm)$.
\end{theorem}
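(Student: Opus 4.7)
The plan is to adapt Gagliardo's Euclidean extension~\cite{Gag} to the metric setting by means of a Whitney-type decomposition. First, fix a countable cover $\{B_i\}_{i\in I}$ of $\Om$ by open balls $B_i=B(x_i,r_i)$ with $r_i$ comparable to $\dist(x_i,\dOm)$, with bounded overlap and finite neighbor-depth, together with a subordinate Lipschitz partition of unity $\{\phi_i\}$ satisfying $\Lip\phi_i\lesssim 1/r_i$. To each $B_i$ attach a boundary ball $\hat B_i\subset\dOm$ of radius $\asymp r_i$ with $\dist(B_i,\hat B_i)\lesssim r_i$; the co-dimension~$1$ Ahlfors regularity~\eqref{boundary-Ahlfors-regularity} then yields $\hcal(\hat B_i)\asymp \mu(B_i)/r_i$. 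For $u\in\Be^0(\dOm)$ set
$$Eu(x)=\sum_{i\in I}\phi_i(x)\,u_{\hat B_i},\qquad u_{\hat B_i}:=\frac{1}{\hcal(\hat B_i)}\int_{\hat B_i}u\,d\hcal.$$
Linearity is immediate, the sum is locally finite in $\Om$, and the bound $\|Eu\|_{L^1(\Om)}\lesssim\|u\|_{L^1(\dOm)}$ follows from Ahlfors regularity, the bounded overlap of $\{B_i\}$ and $\{\hat B_i\}$, and $\diam\Om<\infty$.

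The heart of the matter is the estimate $|DEu|(\Om)\lesssim\|u\|_{\Be^0(\dOm)}$. Since $\sum_i\phi_i\equiv 1$, for $x\in B_{i_0}$ one has $Eu(x)-u_{\hat B_{i_0}}=\sum_{j\sim i_0}\phi_j(x)(u_{\hat B_j}-u_{\hat B_{i_0}})$, where $j\sim i_0$ indicates Whitney neighbors, so a local Lipschitz bound yields a $1$-weak upper gradient of $Eu$ on $B_{i_0}$ of size $\lesssim r_{i_0}^{-1}\max_{j\sim i_0}|u_{\hat B_j}-u_{\hat B_{i_0}}|$. Integrating, using the elementary mean-value inequality for pairs of averages, and absorbing the weight $\mu(B_i)/r_i\asymp\hcal(\hat B_i)$, one arrives at
$$|DEu|(\Om)\lesssim\sum_i\sum_{j\sim i}\int_{\hat B_i}\int_{\hat B_j}\frac{|u(y)-u(z)|}{\hcal(\hat B_i)}\,d\hcal(z)\,d\hcal(y).$$
The principal obstacle is dominating this discrete double sum by the continuous Besov energy
$$\|u\|_{\Be^0(\dOm)}\asymp\int_{\dOm}\int_{\dOm}\frac{|u(y)-u(z)|}{\hcal(B(y,d(y,z)))}\,d\hcal(z)\,d\hcal(y).$$
This requires a dyadic regrouping of the Whitney scales together with the co-dimension~$1$ Ahlfors regularity to match $\hcal(\hat B_i)\asymp\hcal(B(y,d(y,z)))$ when $y,z$ lie in Whitney-neighbor boundary balls; the bounded-overlap and finite neighbor-depth properties are what prevent a logarithmic loss when summing over scales.

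Finally, to verify $T\circ E=\mathrm{id}$, fix a Lebesgue point $z_0\in\dOm$ of $u$ with respect to $\hcal$ and examine the defining limit~\eqref{defoftrace} of $T(Eu)(z_0)$. For $x\in B(z_0,r)\cap\Om$, only indices with $r_i\lesssim r$ and $\hat B_i\subset B(z_0,Cr)$ contribute nontrivially, so each $u_{\hat B_i}$ appearing in $Eu(x)$ tends to $u(z_0)$ as $r\to 0^+$ by the Lebesgue point property. Combined with $\sum_i\phi_i\equiv 1$, this forces the $\mu$-average of $Eu$ over $B(z_0,r)\cap\Om$ to converge to $u(z_0)$, giving $T(Eu)(z_0)=u(z_0)$ for $\hcal$-a.e. $z_0\in\dOm$, and completing the proof.
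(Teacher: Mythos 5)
Your proposal follows essentially the same route as the paper: a Whitney cover of $\Om$ with a subordinate Lipschitz partition of unity, extension by averages of $u$ over associated boundary balls, a gradient bound obtained by telescoping against neighboring averages and regrouping the resulting double sums dyadically into the Besov energy via the co-dimension~$1$ Ahlfors regularity and bounded overlap, and a Lebesgue-point argument for $T\circ E=\mathrm{id}$. The construction, the key estimates, and the order of the argument all match the paper's proof.
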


\begin{theorem}\label{thm:main2}
With $\Om$ a bounded domain in $X$ satisfying the co-dimension~$1$ Ahlfors
regularity~\eqref{boundary-Ahlfors-regularity}, there is a \emph{nonlinear}
bounded extension operator $\Ext:L^1(\dOm)\to BV(\Om)$ such that $T\circ\Ext$
is the identity operator on $L^1(\dOm)$.
\end{theorem}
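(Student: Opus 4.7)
The plan is to adapt Gagliardo's layer-cake strategy from~\cite{Gag} to the metric setting. The nonlinearity arises because level-set formation is not a linear operation on $f$. The heart of the argument is a BV-extension lemma for characteristic functions of boundary sets, which is then integrated over the height variable.

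Specifically, I would first prove the following key lemma: for every measurable set $E\subset\dOm$ there exists $u_E\in BV(\Om)$ with $\|u_E\|_{BV(\Om)}\le C\hcal(E)$ (a constant depending only on $\Om$ and the Ahlfors-regularity data) and $Tu_E=\chi_E$ $\hcal$-a.e.\ on $\dOm$. To build $u_E$ I would use a Whitney-type cover $\{B_i\}$ of $\Om$ by balls of radii comparable to their distance to $\dOm$, assign each $B_i$ a boundary ``shadow'' $S_i\subset\dOm$ of comparable radius, and take a Lipschitz partition of unity $\{\phi_i\}$ subordinate to $\{B_i\}$, setting
\[
u_E=\sum_i \phi_i\,\frac{\hcal(S_i\cap E)}{\hcal(S_i)},
\]
tapered to zero well inside $\Om$. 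The BV seminorm of $u_E$ is controlled by summing the jumps of the average weights across adjacent Whitney balls; a scale-telescoping together with co-dimension $1$ Ahlfors regularity then collapses this into a bound by $\hcal(E)$.

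Given the lemma, the theorem follows easily. For $f\in L^1(\dOm)$ I split $f=f^+-f^-$ and apply the layer-cake decomposition $f^{\pm}(y)=\int_0^\infty\chi_{E_t^\pm}(y)\,\dd t$ with $E_t^\pm:=\{f^\pm>t\}$, and define
\[
\Ext f := \int_0^\infty\bigl(u_{E_t^+}-u_{E_t^-}\bigr)\,\dd t.
\]
The triangle inequality for $\|\cdot\|_{BV(\Om)}$ in integral form together with the lemma gives
\[
\|\Ext f\|_{BV(\Om)}\le C\int_0^\infty\bigl(\hcal(E_t^+)+\hcal(E_t^-)\bigr)\,\dd t = C\|f\|_{L^1(\dOm)}
\]
by another layer-cake. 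The identity $T\circ\Ext=\mathrm{id}$ follows by a Fubini-type exchange of the $t$-integral with $T$ and undoing the layer-cake. Nonlinearity of $\Ext$ is baked in by the nonlinear dependence of $E_t^\pm$ on $f$.

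The main obstacle is the lemma, and within it the balancing of two coupled constraints: obtaining the sharp bound $\|u_E\|_{BV(\Om)}\lesssim\hcal(E)$ while simultaneously ensuring that $Tu_E=\chi_E$ pointwise $\hcal$-a.e.\ in the precise sense of~\eqref{defoftrace}, not merely in some averaged sense. The former requires careful tapering of $u_E$ away from $\dOm$ plus a scale-telescoping in the jump estimate; the latter requires that the shadows $S_i$ attached to Whitney balls approaching a boundary point $y$ shrink to $y$ with density $1$ with respect to $\hcal$. Both are achievable under co-dimension $1$ Ahlfors regularity but demand delicate geometric bookkeeping. A secondary matter is verifying that $\Ext f$ itself admits a well-defined trace in the sense of~\eqref{defoftrace}, which should follow from the lemma via a Fubini-type exchange together with dominated convergence.
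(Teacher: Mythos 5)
Your layer-cake reduction founders on the key lemma, and the failure is not a matter of bookkeeping. The construction you propose for $u_E$, namely $u_E=\sum_i\phi_i\,\hcal(S_i\cap E)/\hcal(S_i)$, is exactly the paper's Whitney-average extension applied to $f=\chi_E$, and what that construction controls is not $\hcal(E)$ but the $B^0_{1,1}$-seminorm of $\chi_E$: summing the jumps of the averages over adjacent Whitney balls at scale $2^{\ell}$ produces, after using Ahlfors codimension-$1$ regularity, a quantity comparable to $\sum_{\ell\le j_0}\int_{\dOm}\fint_{B(z,2^{7+\ell})}|\chi_E(z)-\chi_E(w)|\,d\hcal(w)\,d\hcal(z)$. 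The summands do not telescope (they are absolute oscillations, not signed differences), and each term can be bounded below by a fixed multiple of $\hcal(E)$ uniformly in $\ell$ for badly distributed $E$, so the sum diverges. The paper's own Example~\ref{exa:Bes0!=L1} exhibits precisely such a set: the function there is $\{0,1\}$-valued, i.e.\ a characteristic function $\chi_E$ of a measurable $E\subset(0,1)$ with $\chi_E\in L^1\setminus B^0_{1,1}$. For such $E$ your $u_E$ has infinite variation, so the bound $\|u_E\|_{BV(\Om)}\le C\hcal(E)$ fails. The lemma itself is true (it is the special case $f=\chi_E$ of the theorem), but proving it already requires the full Gagliardo layering device, so the reduction to characteristic functions buys nothing.

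What the paper does instead is apply the layering directly to general $f\in L^1(\dOm)$: choose Lipschitz approximations $f_k\to f$ with $\|f_{k+1}-f_k\|_{L^1(\dOm)}\le 2^{2-k}\|f\|_{L^1(\dOm)}$, then choose layer widths $\rho_k$ decaying fast enough that $\sum_k\rho_k\LIP(f_{k+1},\dOm)\le C\|f\|_{L^1(\dOm)}$, and glue the \emph{linear} Besov extensions $Ef_k$ of Theorem~\ref{thm:main1} using a partition of unity subordinate to the layers $\Om(\rho_{k+1},\rho_{k-1})$. The gradient bound splits into the cross-terms $|E(f_{k+1}-f_k)|\,\Lip\psi_k$, controlled by Corollary~\ref{layer-est-Fn}, and the terms $\Lip E(f_{k+1}-f_k)$ on thin layers, controlled by Corollary~\ref{cor:layer-est-grad-lip} precisely because the data there is Lipschitz and the layer width compensates for the Lipschitz constant; the nonlinearity enters through the choice of $(f_k,\rho_k)$. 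Separately, even granting your lemma, the identity $T\circ\Ext=\mathrm{id}$ does not follow from ``Fubini plus dominated convergence'': the trace in~\eqref{defoftrace} is a limit of averages whose rate of convergence depends on $t$, so interchanging $\lim_{r\to0^+}$ with $\int_0^\infty\,dt$ requires a uniform-in-$t$ estimate that your setup does not provide; the paper handles the analogous interchange by the quantitative choices of $j$, $k_0$ and $k_r$ in Section~\ref{subsec-L1Trace}.
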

The extension from $L^1(\dOm)$ to $BV(\Om)$ cannot in general be linear; this
is not an artifact of our proof, see~\cite{Pee, PW},  and the discussion in
Section~4.8 below.

\begin{remark}
In proving Theorem~\ref{thm:main1} we actually prove a stronger but less
elegant statement. We show that if the boundary of $\Om$, equipped with the
co-dimension~$1$ Hausdorff measure $\mathcal{H}$, is \emph{lower} Ahlfors
regular, that is, if $C\mathcal{H}(B(x,r)\cap\partial\Om)\ge \mu(B(x,r))/r$
whenever $x\in\partial\Om$ and $0<r<2\text{diam}(\partial\Om)$, then there is a
bounded linear extension operator $E:\Be^0(\dOm)\to BV(\Om)$. We then show that
in the event that $\partial\Om$ also satisfies the requirement of
\emph{pointwise upper} co-dimension~$1$ Ahlfors regularity in the sense that
for $\mathcal{H}$-almost every $x\in\partial\Om$ there are constants $C(x)\ge
1$ and $R(x)>0$ such that for $0<r\le R(x)$,
\[
\mathcal{H}(B(x,r)\cap\partial\Om)\le C(x)\, \frac{\mu(B(x,r))}{r},
\]
then $T\circ E$ is the identity operator on $\Be^0(\dOm)$.
\end{remark}

Combining the above two theorems with those of~\cite{LS} we obtain the following
identification of the trace class of $BV(\Om)$.
\begin{corollary}
\label{cor:TrBV=L1}
Let $X$ support a $1$-Poincar\'e inequality. With $\Om$ a
bounded domain in $X$ that satisfies the density condition, i.e.,
\begin{equation}\label{density}
 \mu(B(z,r)\cap\Om)\ge C^{-1}\mu(B(z,r))\ \text{ for all } z\in \Om\text{ and } 0<r<\diam(\Om),
\end{equation}
the co-dimension $1$ Ahlfors regularity~\eqref{boundary-Ahlfors-regularity},
and $1$-Poincar\'e inequality, we have that the trace class of $BV(\Om)$ is
$L^1(\dOm)$.
\end{corollary}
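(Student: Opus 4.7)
The plan is to obtain the corollary as a two-sided containment, combining Theorem~\ref{thm:main2} of this paper with the trace bound proved in \cite{LS}. Concretely, the trace class of $BV(\Om)$ is defined as the image $T(BV(\Om))\subseteq \mathcal{F}$, and showing it equals $L^1(\dOm)$ amounts to establishing both $T(BV(\Om))\subseteq L^1(\dOm)$ and $T(BV(\Om))\supseteq L^1(\dOm)$.

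For the inclusion $T(BV(\Om))\subseteq L^1(\dOm)$, I would invoke the trace theorem from \cite{LS}. That result requires precisely the hypotheses listed in the corollary: the co-dimension~$1$ Ahlfors regularity of $\dOm$, the density condition~\eqref{density} on $\Om$, and the $1$-Poincar\'e inequality. Under these assumptions it is shown in \cite{LS} that for every $u\in BV(\Om)$ the trace $Tu$ is well defined via~\eqref{defoftrace} at $\hcal$-almost every boundary point and satisfies a quantitative bound $\|Tu\|_{L^1(\dOm)}\lesssim \|u\|_{BV(\Om)}$. This simultaneously confirms that $T$ is well defined on all of $BV(\Om)$ (removing the caveat preceding Theorem~\ref{thm:main1}) and places its image inside $L^1(\dOm)$.

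For the reverse inclusion, I would apply Theorem~\ref{thm:main2}: the co-dimension~$1$ Ahlfors regularity of $\dOm$ gives a (nonlinear) bounded operator $\Ext:L^1(\dOm)\to BV(\Om)$ with $T\circ\Ext=\mathrm{id}$ on $L^1(\dOm)$. Hence every $f\in L^1(\dOm)$ is realized as $T(\Ext f)$, so $f\in T(BV(\Om))$. Combining the two containments yields $T(BV(\Om))=L^1(\dOm)$, which is the statement of Corollary~\ref{cor:TrBV=L1}.

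Since both halves are already packaged as previous theorems, there is no real technical obstacle in the corollary itself; the only point to verify is a compatibility check, namely that the notion of trace used in Theorem~\ref{thm:main2} (and hence satisfying $T\circ\Ext=\mathrm{id}$) is the \emph{same} trace operator from \cite{LS} under which $BV$ functions have $L^1$ boundary values. This is transparent because both are defined through the pointwise formula~\eqref{defoftrace}; once this is observed, the corollary follows by composition. The substantive work is in the earlier theorems, not here.
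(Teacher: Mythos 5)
Your proposal is correct and matches the paper's intended argument exactly: the corollary is stated as an immediate consequence of combining the trace theorem of \cite{LS} (which under the density condition, the Ahlfors co-dimension~$1$ regularity, and the $1$-Poincar\'e inequality gives $T(BV(\Om))\subseteq L^1(\dOm)$) with Theorem~\ref{thm:main2} (which gives the reverse inclusion via the bounded nonlinear extension satisfying $T\circ\Ext=\mathrm{id}$). Your added remark that both directions use the same trace operator defined by~\eqref{defoftrace} is a sensible compatibility check and consistent with the paper.
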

In the above we cannot drop any of the respective conditions we impose on the
domain $\Om$. The requirement of the support of a $1$-Poincar\'e inequality is
needed only in order to obtain the trace theorem from~\cite{LS}. As the example
of a slit disc shows, eliminating the support of a Poincar\'e inequality might
result in the failure of the trace theorem, though as the example of the
Euclidean (planar) domain
\[
\Omega=[-2,2]^2\setminus \{(x,y)\in\R^2\, :\, -1\le x\le 1, |x|\le |y|\le 1\}
\]
shows, the support of $1$-Poincar\'e inequality is not essential in obtaining
the trace theorem of~\cite{LS}. The measure density condition~\eqref{density}
is also needed to obtain the trace theorem. Again, this property might not be a
requirement for obtaining the trace theorem of~\cite{LS} and hence the above
corollary, but if the requirement is removed, some other property of the domain
needs to be required as the following example shows. This example is also a
planar Euclidean domain, obtained by pasting a sequence of thin tubes, with
relatively narrower and narrower trunks, to a rectangular base. For each
positive integer $n$ let $U_n$ be the domain given by
\[
 U_n=\left(\frac1{n^2}-4^{-n},\frac1{n^2}+4^{-n}\right)\times[0,2^{-n}),
\]
and let
\[
\Omega=(-1,2)\times (-1,0) \, \cup \bigcup_{n\in\NN} U_n.
\]
The trace theorem of~\cite{LS} fails here because the trace $T(u_n)$ of the
function $u_n=\chi_{(\frac1n-4^{-n},\frac1n+4^{-n})\times[0,2^{-n}]}$ has
$L^1(\partial\Omega)$-norm of the order of $2^{-n}$, while the BV-norm of $u_n$
is of the order of $4^{-n}$. Note that $\Omega$ fails the measure density
condition~\eqref{density}.

For clarity, we note that the statements of Theorems~\ref{thm:main1}
and~\ref{thm:main2} do not require the domains or the ambient metric measure
space to support any Poincar\'e inequality, which allows the domains to have
interior cusps or slits. Thus, even in the Euclidean setting our methods give
rise to new results, as the results of~\cite{Gag} and~\cite{Vit} are in the
setting of Lipschitz domains. Smooth bounded Euclidean domains and bounded
smooth domains in a Riemannian manifold with positive Ricci curvature would
satisfy the hypotheses listed in the above three results. Indeed, such domains
are uniform domains, and as uniform domains in a metric measure space
supporting a $1$-Poincar\'e inequality do support a $1$-Poincar\'e inequality
(see~\cite{BSh}), the trace class of the class of BV functions on such domains
is the $L^1$-class of the boundary of the domain. In general, balls in the
space with center in the smooth domain need not be connected, but there is a
scaling factor $\lambda>0$ such that each ball with center in the domain can be
connected in the $\lambda$-times enlarged ball (that is, all the points in the
original ball belongs to the same connected component of the enlarged ball).
The property of connecting a ball inside a fixed scaled concentric ball is
called \emph{linear local connectivity} in~\cite{HK}. Thus the scaling factor
$\lambda$ on the right-hand side of the Poincar\'e inequality given in
Definition~\ref{def:PI} cannot in general be removed.

A related problem is to investigate the extensions of functions from a domain
$\Omega$ to the whole space. See~\cite{BM, BS, HKT, L, Maz}.
\vskip 1.5ex plus 0.5ex minus 0.5ex
\noindent{\bf Acknowledgement:} The research of the first author was supported
by the Knut and Alice Wallenberg Foundation (Sweden), and the research of the
second author was partially supported by the NSF grant DMS-1500440 (U.S.A.).
Majority of the research for this paper was conducted during the visit of the
third author to the University of Cincinnati; she wishes to thank that
institution for its kind hospitality. The authors also thank the anonymous
referee for suggestions that helped improve the exposition of the paper.
\subsection{Notation and definitions}
In this section $(X,d,\mu)$ denotes a metric measure space with $\mu$ a Radon
measure. We say that $\mu$ is \emph{doubling} if there is a constant $C_D$ such
that for each $z\in Z$ and $r>0$,
\[
   0<\mu(B(z,2r))\le C_D\, \mu(B(z,r))<\infty.
\]

Given a Lipschitz function $f$ on a subset $A\subset X$, we set
\[
\LIP (f,A):=\sup_{x,y\in A\, :\, x\ne y}\frac{|f(x)-f(y)|}{d(x,y)}.
\]
When $x$ is a point in the interior of $A\subset X$, we set
\[
\Lip f(x):=\limsup_{y\to x}\frac{|f(y)-f(x)|}{d(y,x)}.
\]

We follow~\cite{M} to define the function class $BV(X)$. The space $BV(X)$ of
functions of bounded variation consists of functions in $L^1(X)$ that also have
finite total variation on $X$. The total variation of a function on a metric
measure space is measured using upper gradients; the notion of upper gradients,
first formulated in~\cite{HK} (with the terminology ``very weak gradients''),
plays the role of $|\nabla u|$ in the metric setting where no natural
distributional derivative structure exists. A Borel function $g:X\to[0,\infty]$
is an upper gradient of $u:X\to\RR\cup\{\pm\infty\}$ if the following
inequality holds for all (rectifiable) curves $\gamma:[a,b]\to X$, (denoting
$x=\gamma(a)$ and $y=\gamma(b)$),
\[
  |u(y)-u(x)|\leq\int_{\gamma}g\,ds
\]
whenever $u(x)$ and $u(y)$ are both finite, and $\int_{\gamma}g\,ds=\infty$
otherwise. For each function $u$ as above, we set $I(u:X)$ to be the infimum of
the quantity $\int_Xg\, d\mu$ over all upper gradients (in $X$) $g$ of $u$.
\begin{remark}\label{rem:lip-upp1}
We note here that if $u$ is a (locally) Lipschitz function on $Z$, then $\Lip
u$ is an upper gradient of $u$; see for example~\cite{Hei01}. We refer the
interested reader to~\cite{BB, HKST} for more on upper gradients.
\end{remark}
The \emph{total variation} of the function $u\in L^1(X)$ is given by
\[
  \|Du\|(X):=\inf\left\{\liminf_{i\to\infty}I(u_i:X)\colon u_i\in\Lip_{\textrm{loc}}(X),
u_i\to u\textrm{ in }L^1(X)\right\}.
\]
\begin{remark}\label{rem:lip-upp2}
From Remark~\ref{rem:lip-upp1} we know that if $u$ is a locally Lipschitz
continuous function on $X$, then $\Vert Du\Vert(X)\le \int_Z\Lip u\, d\mu$.
\end{remark}
For each open set $U\subset Z$ we can set $\Vert Du\Vert(U)$ similarly:
\[
  \|Du\|(U):=\inf\left\{\liminf_{i\to\infty}I(u_i:U)\colon u_i\in\Lip_{\textrm{loc}}(U), u_i\to u\textrm{ in }L^1(U)\right\}.
\]
It was shown in~\cite{M} that if $\Vert Du\Vert(X)$ is finite, then $U\mapsto
\Vert Du\Vert(U)$ is the restriction of a Radon measure to open sets of $X$. We
use $\Vert Du\Vert$ to also denote this Radon measure.
\begin{definition}
The space $BV(X)$ of functions of bounded variation is equipped with the norm
\[ \|u\|_{BV(X)}:=\|u\|_{L^1(X)}+\|Du\|(X). \]
\end{definition}
This definition of BV agrees with the standard notion of BV functions in the
Euclidean setting, see~\cite{AFP, EvaG92, Zie89}. See also \cite{AMP} for more
on the BV class in the metric setting.

We say that a measurable set $E\subset X$ is of \emph{finite perimeter} if
$\chi_E\in BV(X)$. It follows from~\cite{M} that the superlevel set
$E_t:=\{z\in Z\, :\, u(z)>t\}$ has finite perimeter for almost every $t\in\R$
and that the coarea formula
\[
  \Vert Du\Vert(A)=\int_\R\Vert D\chi_{E_t}\Vert(A) \, dt
\]
holds true whenever $A\subset X$ is a Borel set.
\begin{definition}[cf.\@ \cite{A1}]\label{def:PI}
A metric space $X$ supports a $1$-Poincar\'{e} inequality if there exist
positive constants $\lambda$ and $C$ such that for all balls $B\subset X$ and
all $u\in L^1_\loc(X)$,
\[
\fint_{B}|u-u_{B}|\,d\mu\leq C\rad(B)\frac{\|Du\|(\lambda B)}{\mu(\lambda B)}\,.
\]
\end{definition}
Here and in the rest of the paper, $f_A$ denotes the \emph{integral mean} of a
function $f\in L^0(X)$ over a measurable set $A \subset X$ of finite positive
measure, defined as
\[
  f_A = \fint_A f\,d\mu = \frac{1}{\mu(A)} \int_A f\,d\mu
\]
whenever the integral on the right-hand side exists, not necessarily finite
though. Furthermore, given a ball $B=B(x,r) \subset X$ and $\lambda>0$, the
symbol $\lambda B$ denotes the inflated ball $B(x, \lambda r)$.

Given $A\subset X$, we define its \emph{co-dimension $1$ Hausdorff measure}
$\mathcal{H}(A)$ by
\begin{equation}\label{eq:deff-mathcal}
\mathcal{H}(A)=\lim_{\delta\to 0^+}\ \inf\bigg\lbrace\sum_i\frac{\mu(B_i)}{\rad(B_i)}\, :\, B_i\text{ balls in }X, \rad(B_i)<\delta,
               A\subset \bigcup_iB_i\bigg\rbrace.
\end{equation}

It was shown in~\cite{A1} that if $\mu$ is doubling and supports a
$1$-Poincar\'e inequality, then there is a constant $C\ge 1$ such that whenever
$E\subset X$ is of finite perimeter,
\[
 C^{-1}\mathcal{H}(\partial_m E)\le \Vert D\chi_E\Vert(X)\le C\, \mathcal{H}(\partial_mE),
\]
where $\partial_mE$ is the \emph{measure-theoretic boundary} of $E$. It
consists of those points $z\in X$ for which
\[
\limsup_{r\to 0^+}\frac{\mu(B(z,r)\cap E)}{\mu(B(z,r))}>0\quad
\text{and}\quad
\limsup_{r\to 0^+}\frac{\mu(B(z,r)\setminus E)}{\mu(B(z,r))}>0.
\]

We next turn our attention to the definition of other function spaces to be
considered in this paper. The Besov classes, much studied in the Euclidean
setting, made their first appearance in the metric setting in~\cite{BourPaj}
and were explored further in~\cite{GKS}.
\begin{definition}
Let $(Z,\dd)$ be a metric space equipped with a Radon measure $\nu$. For a
fixed $R>0$, the \emph{Besov space} $B_{1,1}^\theta(Z)$ of smoothness $\theta
\in [0, 1]$ consists of functions of finite Besov norm that is given by
\begin{equation}
  \label{eq:Besov}
  \|u\|_{B_{1,1}^\theta(Z)} = \|u\|_{L^1(Z)} + \int_0^R \int_Z \fint_{B(x, t)} |u(y) - u(x)|\,d\nu(y) \,d\nu(x) \frac{dt}{t^{1+\theta}}\,.
\end{equation}

In our application of Besov spaces, the metric space $Z$ will be the boundary
of a bounded domain in $X$, and the measure $\nu$ will be the restriction of
the co-dimension $1$ Hausdorff measure $\mathcal{H}$ to this boundary.

We will show that the function class $B^\theta_{1,1}(Z)$ is in fact independent
of the choice of $R \in (0, \infty)$, see Lemma~\ref{lem:BesovIndepR} below.

The following \emph{fractional John--Nirenberg space} was first generalized to
the metric measure space setting in~\cite{HKT}. In the Euclidean setting it was
first studied in~\cite{BBM} and~\cite{Brez}, but the case $\theta=0$ in the
Euclidean setting appeared in the earlier work of John and Nirenberg~\cite{JN}.
The fractional John--Nirenberg space $A^{\theta}_{1,\tau}(Z)$, where $\theta
\in[0,1]$ is its smoothness and $\tau \ge 1$ the dilation factor, is defined
via its norm
\begin{equation}
  \label{eq:JN}
  \|u\|_{A^{\theta}_{1,\tau}(Z)} = \|u\|_{L^1(Z)} + \sup_{\bcal_\tau} \sum_{B \in \bcal_\tau}
     \frac{1}{\rad(B)^\theta} \int_{\tau B} |u-u_{\tau B}|\,d\nu,
\end{equation}
where the supremum is taken over all collections $\bcal_\tau$ of balls in $Z$
of radius at most $R/\tau$ such that $\tau B_1\cap\tau B_2$ is empty whenever
$B_1,B_2\in\bcal_\tau$ with $B_1\ne B_2$. The class $A^{\theta}_{1,\tau}(Z)$ is
also independent of the exact choice of $R \in (0, \infty)$.
\end{definition}
\subsection{Standing assumptions}
Throughout this paper $(X,d,\mu)$ is a metric measure space, with $\mu$ a Borel
regular measure. We assume that $X$ is complete and that $\mu$ is doubling on
$X$. Furthermore, $\Omega\subset X$ is a bounded domain and there is a constant
$C\ge 1$ such that for all $x\in \dOm$ and $0<r\le \diam(\Om)$, we have
\begin{equation}\label{boundary-Ahlfors-regularity}
 C^{-1}\frac{\mu(B(x,r))}{r}\le \mathcal{H}(B(x,r)\cap\dOm)\le C\frac{\mu(B(x,r))}{r}.
\end{equation}
The property of satisfying~\eqref{boundary-Ahlfors-regularity} will be called
\emph{Ahlfors codimension $1$ regularity} of $\dOm$.

Throughout the paper $C$ represents various constants that depend solely on the
doubling constant, constants related to the Poincar\'e inequality, and the
constants related to~\eqref{boundary-Ahlfors-regularity}. The precise value of
$C$ is not of interest to us at this time, and its value may differ in each
occurrence. Given expressions $a$ and $b$, we say that $a\approx b$ if there is
a constant $C\ge 1$ such that $C^{-1}a\le b\le C a$.
\section{Bounded linear extension from Besov class to BV class: proof of Theorem~\ref{thm:main1}}
\label{sec:B110-extension}
\subsection{Whitney cover and partition of unity}
The following theorem from~\cite[Section 4.1]{HKST} gives the existence of a
Whitney \textit{covering} of an open subset $\Omega$ of a doubling metric space
$X$ by balls whose radii are comparable to their distance from the boundary,
see also \cite{BBS}.
\begin{theorem}
Let $\Omega\subsetneq X$ be bounded and open.  Then there exists a countable
collection $\mathcal{W}_\Omega=\{B(p_{j, i},r_{j,i})=B_{j,i}\}$ of balls in
$\Omega$ so that
\begin{itemize}
  \setlength{\parskip}{0pt}
  \setlength{\itemsep}{2pt plus 1pt minus 2pt}
 \item $\bigcup_{j,i} B_{j,i} = \Omega$,
 \item $\sum_{j,i} \chi_{B(p_{j,i}, 2r_{j,i})}\leq 2C_D^5$,
 \item $2^{j-1}<r_{j,i}\leq 2^{j}$ for all $i$,
 \item and so that $r_{j,i}=\frac{1}{8}\dist(p_{j,i}, X\setminus \Omega)$.
\end{itemize}
Here the constant $C_D$ is the doubling constant of the measure $\mu$.
\end{theorem}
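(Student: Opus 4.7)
The plan is to carry out the classical Whitney decomposition argument in the metric setting, partitioning $\Omega$ into dyadic annular shells indexed by distance to $X \setminus \Omega$ and selecting a maximal separated net in each shell. Explicitly, I would define
\[
\Omega_j = \{x \in \Omega \, : \, 2^{j+2} < \dist(x, X \setminus \Omega) \leq 2^{j+3}\}
\]
for each $j \in \ZZ$, so that the $\Omega_j$ are pairwise disjoint with union $\Omega$, and then, within each $\Omega_j$, choose a maximal $2^{j-1}$-separated set $\{p_{j,i}\}_i$ (existence by Zorn's lemma or by a greedy selection, countability because doubling forces each bounded subset of $\Omega_j$ to be separable). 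Setting $r_{j,i} = \tfrac{1}{8}\dist(p_{j,i}, X \setminus \Omega)$ then automatically yields $r_{j,i} \in (2^{j-1}, 2^j]$ and the boundary-distance identity, verifying the third and fourth bullets of the statement by construction.

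The covering property is the easy step: given $x \in \Omega$, let $j$ be the unique integer with $x \in \Omega_j$; by maximality of the separated net there is some $p_{j,i}$ with $d(x, p_{j,i}) < 2^{j-1} < r_{j,i}$, so $x \in B_{j,i}$. The substantive part of the theorem is the bounded-overlap estimate for the inflated balls $2 B_{j,i}$, which I would prove pointwise. Fix $x \in \Omega$ and suppose $x \in B(p_{j,i}, 2 r_{j,i})$. Since $2 r_{j,i} \leq 2^{j+1}$ and $\dist(p_{j,i}, X \setminus \Omega) \in (2^{j+2}, 2^{j+3}]$, the triangle inequality localizes $\dist(x, X \setminus \Omega)$ in the interval $(2^{j+1}, 2^{j+3} + 2^{j+1})$. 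Hence only a uniformly bounded set of shell indices $j$ can contribute to $x$.

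For each such admissible $j$, the relevant centers $p_{j,i}$ all lie in $B(x, 2^{j+1})$ and are pairwise at distance at least $2^{j-1}$, so the balls $B(p_{j,i}, 2^{j-2})$ are pairwise disjoint and all contained in $B(x, 2^{j+2}) \subset B(p_{j,i}, 2^{j+3})$. Five applications of the doubling inequality give $\mu(B(p_{j,i}, 2^{j+3})) \leq C_D^5 \mu(B(p_{j,i}, 2^{j-2}))$ for each $i$, and summing the disjoint volumes caps the number of such $i$'s by $C_D^5$. Combined with the shell count this produces an overlap bound of the stated form $2 C_D^5$ after careful tracking of the constants. The main technical obstacle is precisely this packing-plus-doubling bookkeeping in the final step; the separated-net construction itself is a transparent metric analogue of the Euclidean Whitney decomposition, with the doubling hypothesis on $\mu$ entering only to convert separation-and-containment into a quantitative count.
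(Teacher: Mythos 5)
The paper does not prove this statement: it is quoted from~\cite{HKST} (Section 4.1), so there is no in-paper argument to compare against. Your construction---maximal $2^{j-1}$-separated nets in the dyadic shells $\{x\in\Omega : 2^{j+2} < \dist(x, X\setminus\Omega) \le 2^{j+3}\}$, radii set to one-eighth of the distance to the complement, the covering property from maximality of the net, and the bounded overlap from a packing-plus-doubling count---is the standard proof of this Whitney-type decomposition and is correct in all essentials (countability of each level follows from boundedness of $\Omega$ together with the packing bound, as you note).

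The one loose end is the numerical constant. Your localization gives $2^{j+1} < \dist(x, X\setminus\Omega) < 5\cdot 2^{j+1}$ for any shell index $j$ contributing a dilated ball containing $x$, which confines $j$ to an open interval of length $\log_2 5 \approx 2.32$; such an interval can contain \emph{three} integers, and your per-shell argument gives at most $C_D^5$ balls per shell, so the bound you actually obtain is $3C_D^5$, not $2C_D^5$. The phrase ``after careful tracking of the constants'' does not close that gap within this exact construction. This is immaterial for the rest of the paper, which only ever uses that the overlap is bounded by a constant depending on $C_D$; but if the literal constant $2C_D^5$ is wanted, one must follow the precise bookkeeping of~\cite{HKST} rather than assert it.
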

The radii of the balls are small enough so that $2B_i\subset\Omega$. Also,
since we are only concerned with bounded domains $\Omega$, there is a largest
exponent $j$ that occurs in the cover; we denote this exponent by $j_0$. Hence
$-j\in\NN\cup\{0,\cdots, -j_0\}$. Note that $2^{j_0}$ is comparable to
$\diam(\Om)$. One wishing to keep track of the relationships between various
constants should therefore keep in mind that the constants that depend on $j_0$
then depend on $\diam(\Om)$.

We also note that no ball in level $j$ intersects a ball in level $j+2$.  This
follows by the reverse triangle inequality $d(p_{j,i},p_{j+2,k})\geq
2^{j+4}-2^{j+3}=2^{j+3}$ and the bounds on the radii: $2^{j-1}<r_{j,i}\leq 2^j$
and $2^{j+1}<r_{j+2,k}\leq 2^{j+2}$.

As in~\cite[Section~4.1]{HKST}, there is a Lipschitz partition of unity
$\{\phi_{j,i}\}$ subordinate to the Whitney decomposition $\wcal_\Om$, that is,
$\sum_{j,i} \phi_{j,i} \equiv \chi_\Om$ and for every ball
$B_{j,i}\in\wcal_\Om$, we have that $\chi_{B_{j,i}} \le \phi_{j,i} \le
\chi_{2B_{j,i}}$ and $\phi_{j,i}$ is $C/r_{j,i}$-Lipschitz continuous.
\subsection{An extension of Besov functions}
\label{ssec:BesovExt}
Suppose that $f:\partial\Omega\to\RR$ is a function in
$B_{1,1}^0(\partial\Omega)$. We want to define a function $F:\Omega\to\RR$
whose trace is the original function $f$ on $\partial\Omega$.

Consider the center of the Whitney ball $p_{j,i}\in\Omega$ and choose a closest
point $q_{j,i}\in\partial\Omega$. Define $U_{j,i}:=B(q_{j,i}, r_{j,i})\cap
\partial\Omega$. We set $a_{j,i}:=\fint_{U_{j,i}}f(y)\,d\mathcal{H}(y)$. Then
for $x\in\Omega$ set
\[
F(x):=\sum_{j,i} a_{j,i} \phi_{j,i}.
\]
In subsequent results in this section we will show that $F\in BV(\Om)$. From
the following proposition and Remark~\ref{rem:lip-upp2} we obtain the desired
bound for $\Vert DF\Vert(\Om)$.
\begin{proposition}
\label{prop:extnBounds}
Given $\Omega\subset X$ and $f\in B_{1,1}^0(\partial\Omega)$, there exists
$C>0$ such that
\[
  \int_\Omega \Lip F\,d\mu \leq C\|f\|_{B_{1,1}^0(\partial\Omega)}.
\]
\end{proposition}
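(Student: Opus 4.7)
The plan is to bound $\Lip F$ pointwise on each Whitney ball by a bounded number of differences $|a_{k,l}-a_{j,i}|$, integrate against $\mu$, and then sum over the Whitney cover, identifying the resulting discrete sum with the Besov double integral via bounded overlap of the cover and the doubling property. Since $\{2B_{j,i}\}$ has multiplicity at most $2C_D^5$, only a uniformly bounded number of $\phi_{k,l}$ are nonzero at any given $x\in\Omega$, so $F$ is locally Lipschitz in $\Omega$ and $\Lip F$ is defined. Fixing $x\in B_{j,i}$ and using $\sum_{k,l}\phi_{k,l}\equiv 1$ to write $F(x)-a_{j,i}=\sum_{k,l}(a_{k,l}-a_{j,i})\phi_{k,l}(x)$, only neighbors with $2B_{k,l}\cap B_{j,i}\neq\emptyset$ contribute, and the level-gap observation from Section~2.1 forces $|k-j|\le 1$, so $r_{k,l}\approx r_{j,i}$ for all such neighbors. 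Combined with $\Lip\phi_{k,l}\le C/r_{k,l}$ this gives
\[
\Lip F(x)\le\frac{C}{r_{j,i}}\sum_{(k,l)\in N(j,i)}|a_{k,l}-a_{j,i}|,
\]
where $N(j,i)$ is the uniformly bounded set of neighboring indices.

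For every $(k,l)\in N(j,i)$ one has $d(q_{k,l},q_{j,i})\le Cr_{j,i}$, so for a sufficiently large $\kappa$ depending only on the Whitney constants, all the relevant $U_{k,l}$ lie inside $V_{j,i}:=B(q_{j,i},\kappa r_{j,i})\cap\partial\Omega$. Codimension~$1$ Ahlfors regularity~\eqref{boundary-Ahlfors-regularity} together with doubling gives $\mathcal{H}(V_{j,i})\approx\mathcal{H}(U_{k,l})\approx\mu(B_{j,i})/r_{j,i}$. A Jensen-type comparison through $f_{V_{j,i}}$ then yields $|a_{k,l}-a_{j,i}|\le C\fint_{V_{j,i}}|f-f_{V_{j,i}}|\,d\mathcal{H}$, and integrating over $B_{j,i}$ produces
\[
\int_{B_{j,i}}\Lip F\,d\mu\le\frac{C\mu(B_{j,i})}{r_{j,i}}\fint_{V_{j,i}}|f-f_{V_{j,i}}|\,d\mathcal{H}\le C\int_{V_{j,i}}|f-f_{V_{j,i}}|\,d\mathcal{H}.
\]

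It remains to sum these per-ball estimates. At each level $j$, the bounded overlap of $\{2B_{j,i}\}_i$ together with doubling and $d(p_{j,i},q_{j,i})=8r_{j,i}$ imply that the family $\{V_{j,i}\}_i$ of boundary balls of radius $\approx 2^j$ has bounded multiplicity, by a standard volume-counting argument. Using $\int_V|f-f_V|\,d\mathcal{H}\le\int_V\fint_V|f(x)-f(y)|\,d\mathcal{H}(y)\,d\mathcal{H}(x)$, enlarging the inner $V_{j,i}$ to $B(x,Cr_{j,i})\cap\partial\Omega$ by doubling, summing over $i$ at fixed $j$, and then over the finitely many $j\le j_0$, I identify $\sum_j$ with $\int_0^R dt/t$ via the dyadic decomposition, absorb the radius inflation by a change of variable in $t$, and conclude $\int_\Omega\Lip F\,d\mu\le C\|f\|_{B_{1,1}^0(\partial\Omega)}$. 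The main obstacle is this final step: the passage from a discrete Whitney sum to the continuous Besov integral requires careful bookkeeping of the bounded multiplicity of $\{V_{j,i}\}_i$ at each scale, together with the $R$-independence of the Besov norm so that the doubling-based inflation of averaging radii does not spoil the identification.
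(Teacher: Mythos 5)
Your proposal is correct and follows essentially the same route as the paper: the same pointwise bound $\Lip F\le (C/r_{\ell,m})\sum|a_{j,i}-a_{\ell,m}|$ via the partition of unity and the level-gap property, the same comparison of averages over inflated boundary balls using codimension-$1$ Ahlfors regularity (your oscillation $\fint_V|f-f_V|$ is equivalent, up to a factor of $2$, to the paper's double average $\fint_V\fint_V|f(z)-f(w)|$), and the same final summation using bounded overlap at each Whitney level and the dyadic identification of $\sum_\ell$ with $\int_0^R dt/t$. The step you flag as the main obstacle is handled in the paper exactly as you outline, via the bounded overlap of the inflated sets $U^*_{\ell,m}$ and the freedom to choose $R\approx\diam(\Omega)$ in the Besov norm.
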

\begin{proof}
Fix a ball $B_{\ell,m}\in \wcal_\Om$, and fix a point $x \in B_{\ell,m}$.  For
all $y\in B_{\ell,m}$,
  \begin{align*}
    |F(y)-F(x)| = \left|\sum_{j,i}a_{j,i}(\phi_{j,i}(y)-\phi_{j,i}(x))\right|& \\
     = \left|\sum_{j,i}(a_{j,i}-a_{\ell,m})(\phi_{j,i}(y)-\phi_{j,i}(x))\right|
    \leq &\sum_{\substack{j,i\, \textrm{ s.t.}\\ 2B_{j,i}\cap B_{\ell,m}\ne\emptyset}}|a_{j,i}-a_{\ell,m}|\frac{C}{r_{j,i}}d(y,x).
  \end{align*}
The last inequality in the above sequence follows from the Lipschitz constant
of $\phi_{j,i}$.  Rearranging and noting that if the balls intersect then
$|j-\ell|\le 1$, we see that
  \[
  \frac{|F(y)-F(x)|}{d(y,x)}\leq \frac{C}{r_{\ell,m}} \sum_{\substack{j,i \,\textrm{ s.t.} \\
    2B_{j,i}\cap B_{\ell,m}\neq \emptyset}}|a_{j,i}-a_{\ell,m}|.
  \]
Hence, we want to bound terms of the form $|a_{j,i}-a_{\ell,m}|$:
\begin{align}
  |a_{j,i}-a_{\ell,m}| &= \left|\fint_{U_{j,i}}f(z)\,d\mathcal{H}(z)-\fint_{U_{\ell,m}}f(z)\,d\mathcal{H}(z)\right|\notag \\
  &=   \left|\fint_{U_{j,i}}\fint_{U_{\ell,m}}\bigl(f(z)-f(w)\bigr)\,d\mathcal{H}(w)\,d\mathcal{H}(z)\right|  \notag\\
  &\le  \fint_{U_{j,i}}\fint_{U_{\ell,m}}  \left|f(z)-f(w)\right|\,d\mathcal{H}(w)\,d\mathcal{H}(z) \notag \\
  &= \frac{1}{\mathcal{H}(U_{j,i})\mathcal{H}(U_{\ell,m})}\int_{U_{j,i}}\int_{U_{\ell,m}}
      \left|f(z)-f(w)\right|\,d\mathcal{H}(w)\,d\mathcal{H}(z) \notag \\
  \label{ineq:expandballs1}
  &\leq \frac{C}{\mathcal{H}(U_{\ell,m}^*)\mathcal{H}(U_{\ell,m}^*)}\int_{U_{j,i}}\int_{U_{\ell,m}}
     \left|f(z)-f(w)\right|\,d\mathcal{H}(w)\,d\mathcal{H}(z)  \\
  &\leq \frac{C}{\mathcal{H}(U_{\ell,m}^*)\mathcal{H}(U_{\ell,m}^*)}\int_{U^*_{\ell,m}}\int_{U^*_{\ell,m}}
      \left|f(z)-f(w)\right|\,d\mathcal{H}(w)\,d\mathcal{H}(z) \notag\\
  &= C\fint_{U^*_{\ell,m}}\fint_{U^*_{\ell,m}}  \left|f(z)-f(w)\right|\,d\mathcal{H}(w)\,d\mathcal{H}(z),\notag
\end{align}
where $U^*_{\ell,m}$ denotes the expanded subset of the boundary:
\begin{equation}
\label{eq:expandedsubset}
  U^*_{\ell,m}:=B(q_{\ell,m}, 2^{6}r_{\ell,m})\cap\partial\Omega.
\end{equation}
By the doubling property of $X$, the boundary regularity condition on
$\partial\Omega$, and the definition of codimension-$1$ Hausdorff measure, we
have
\[
\mathcal{H}(U^*_{\ell,m})\leq C\mathcal{H}(U_{\ell,m})\,,
\]
which gave inequality~\eqref{ineq:expandballs1}. The above estimates together
with the bounded overlap of the Whitney balls yield the following inequality:
\begin{align}\label{eq:pointwise-Lip-est}
\Lip F(x)&=\limsup_{y\to x}\frac{|F(y)-F(x)|}{d(y,x)}\notag\\
   &\le \frac{C}{r_{\ell,m}}\,  \fint_{U^*_{\ell,m}}\fint_{U^*_{\ell,m}}  \left|f(z)-f(w)\right|\,d\mathcal{H}(w)\,d\mathcal{H}(z)
\end{align}
for $x\in B_{\ell,m}$. From~\eqref{eq:pointwise-Lip-est}
and~\eqref{boundary-Ahlfors-regularity} we see that
\begin{align*}
 \int_\Omega \Lip F(x)\,d\mu(x) &\le \sum_{\ell,m}\int_{B_{\ell,m}}\Lip F(x)\, d\mu(x)\notag\\
      &\le \sum_{\ell, m}\mu(B_{\ell,m})\, \frac{C}{r_{\ell,m}}\,
          \fint_{U^*_{\ell,m}}\fint_{U^*_{\ell,m}}  \left|f(z)-f(w)\right|\,d\mathcal{H}(w)\,d\mathcal{H}(z)\notag\\
 &\leq C\sum_{\ell,m} \hcal(U_{\ell,m})
 \fint_{U^*_{\ell,m}}\fint_{U^*_{\ell,m}}  \left|f(z)-f(w)\right|\,d\mathcal{H}(w)\,d\mathcal{H}(z)\\  &\leq C\sum_{\ell=-\infty}^{j_0}\sum_{m}
 \int_{U^*_{\ell,m}}\fint_{U^*_{\ell,m}}  \left|f(z)-f(w)\right|\,d\mathcal{H}(w)\,d\mathcal{H}(z)\\
 &\leq {C}\sum_{\ell=-\infty}^{j_0}
 \int_{\partial\Omega}\fint_{B(z,2^{7+\ell})}  \left|f(z)-f(w)\right|\,d\mathcal{H}(w)\,d\mathcal{H}(z)\,.
\end{align*}
Here the last inequality follows from the uniformly bounded overlap of the
balls $U_{\ell,m}^*$ for each $\ell$. Without loss of generality, we may choose
$R = 2^{j_0+7}$ in the definition of the Besov norm \eqref{eq:Besov}. Note that
$R\approx \diam(\Omega)$ then. The following estimate (cf.\@ the proof
of~\cite[Theorem~5.2]{GKS}) concludes the proof:
\begin{align}
\sum_{\ell=-\infty}^{j_0}&
 \int_{\partial\Omega}\fint_{B(z,2^{7+\ell})}  \left|f(z)-f(w)\right|\,d\mathcal{H}(w)\,d\mathcal{H}(z)\notag\\
 & \label{eq:BesovEquivSum}
   \approx\int_{t=0}^{2^{j_0+7}}
   \int_{\partial\Omega}\fint_{B(z,t)}  \left|f(z)-f(w)\right|\,d\mathcal{H}(w)\,d\mathcal{H}(z)\frac{dt}{t} \\
 & \le C\|f\|_{B_{1,1}^0(\partial\Omega)}\,. \notag \qedhere
\end{align}
\end{proof}
We will use the extension constructed in this section in formulating a
nonlinear bounded extension from $L^1(\dOm,\mathcal{H})$ to $BV(\Om)$ in the
subsequent sections. There we will need the following estimates for the
integral of the gradient and the function on layers of $\Omega$.
\begin{lemma}\label{layer-est-grad}
For $0 \le \rho_1<\rho_2<\diam(\Om)/2$, set
\begin{equation}
  \label{eq:def-OmRhoRho}
  \Om(\rho_1,\rho_2):=\{x\in\Om\, :\, \rho_1\le \dist(x,X\setminus\Om)<\rho_2\}.
\end{equation}
Let $\mathcal{J}(\rho_1,\rho_2)$ be the collection of all $\ell\in\ZZ$ such
that there is some $m\in\NN$ with $B_{\ell,m}\cap\Omega(\rho_1,\rho_2)$
non-empty. Then
\[
\int_{\Om(\rho_1,\rho_2)}\Lip F\, d\mu\le C\, \sum_{\ell\in\mathcal{J}(\rho_1,\rho_2)}
    \int_{\dOm}\fint_{B(z,2^{7+\ell})}|f(z)-f(w)|\, d\mathcal{H}(w)\,d\mathcal{H}(z).
\]
\end{lemma}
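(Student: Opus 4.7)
The plan is to mimic the proof of Proposition~\ref{prop:extnBounds} verbatim, but run all the sums only over those Whitney balls that actually meet the layer $\Om(\rho_1,\rho_2)$, which by the definition of $\mathcal{J}(\rho_1,\rho_2)$ automatically restricts the outer level-sum to $\ell\in\mathcal{J}(\rho_1,\rho_2)$. The pointwise Lipschitz estimate \eqref{eq:pointwise-Lip-est} was derived for $x$ in an arbitrary Whitney ball $B_{\ell,m}$, so no new pointwise bound is needed; only the global summing step has to be localized.

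First I would write
\[
\int_{\Om(\rho_1,\rho_2)}\Lip F\,d\mu\le
\sum_{\substack{\ell,m:\\ B_{\ell,m}\cap\Om(\rho_1,\rho_2)\ne\emptyset}}
\int_{B_{\ell,m}}\Lip F\,d\mu,
\]
using that the Whitney balls cover $\Om$, hence cover $\Om(\rho_1,\rho_2)$. By definition, every index $(\ell,m)$ in this sum has $\ell\in\mathcal{J}(\rho_1,\rho_2)$. Applying \eqref{eq:pointwise-Lip-est} pointwise on $B_{\ell,m}$ and integrating gives
\[
\int_{B_{\ell,m}}\Lip F\,d\mu\le
\mu(B_{\ell,m})\,\frac{C}{r_{\ell,m}}\fint_{U^*_{\ell,m}}\fint_{U^*_{\ell,m}}|f(z)-f(w)|\,d\mathcal{H}(w)\,d\mathcal{H}(z).
\]
Next, invoking the co-dimension $1$ Ahlfors regularity \eqref{boundary-Ahlfors-regularity} and the doubling property, I replace the prefactor $\mu(B_{\ell,m})/r_{\ell,m}$ by $C\,\mathcal{H}(U_{\ell,m})$, exactly as in the chain leading to \eqref{ineq:expandballs1} in the proof of Proposition~\ref{prop:extnBounds}.

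I would then regroup the remaining sum by level $\ell$ and, for each fixed $\ell\in\mathcal{J}(\rho_1,\rho_2)$, use the uniformly bounded overlap of the boundary sets $U^*_{\ell,m}$ (this overlap bound is inherited from the bounded overlap of the Whitney balls $2B_{\ell,m}$ together with the doubling of $\mathcal{H}$ on $\dOm$). This turns $\sum_m \mathcal{H}(U_{\ell,m})\fint_{U^*_{\ell,m}}\fint_{U^*_{\ell,m}}|f(z)-f(w)|\,d\mathcal{H}(w)\,d\mathcal{H}(z)$ into a single integral
\[
C\int_{\dOm}\fint_{B(z,2^{7+\ell})}|f(z)-f(w)|\,d\mathcal{H}(w)\,d\mathcal{H}(z),
\]
after first bounding $\fint_{U^*_{\ell,m}}$ at the outer point $z$ by $\fint_{B(z,2^{7+\ell})}$ (using that $U^*_{\ell,m}\subset B(z,2^{7+\ell})\cap\dOm$ for $z\in U^*_{\ell,m}$, together with another application of doubling and boundary Ahlfors regularity).

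Summing over $\ell\in\mathcal{J}(\rho_1,\rho_2)$ yields the asserted bound. The one place a reader might pause is the control of the level-set $\mathcal{J}(\rho_1,\rho_2)$, but the lemma does not ask for an estimate in terms of $\rho_1,\rho_2$; it simply keeps the set $\mathcal{J}(\rho_1,\rho_2)$ abstract, so there is no real obstacle. The only mild care needed is to confirm that the two overlap/doubling steps (expanding $U_{\ell,m}$ to $U^*_{\ell,m}$, and bounded overlap of $\{U^*_{\ell,m}\}_m$ at fixed level) hold uniformly in $\ell$; both follow from the standing hypotheses, so the proof is essentially a bookkeeping restriction of Proposition~\ref{prop:extnBounds} to the layer.
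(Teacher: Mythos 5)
Your proposal is correct and follows essentially the same route as the paper's own proof: restrict the Whitney-ball sum to the levels in $\mathcal{J}(\rho_1,\rho_2)$, apply the pointwise estimate \eqref{eq:pointwise-Lip-est}, convert $\mu(B_{\ell,m})/r_{\ell,m}$ to $\mathcal{H}(U_{\ell,m})$ via \eqref{boundary-Ahlfors-regularity}, enlarge the inner average to $B(z,2^{7+\ell})$, and use the bounded overlap of the sets $U^*_{\ell,m}$ at each fixed level. No gaps.
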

\begin{proof}
For each $\ell\in\mathcal{J}(\rho_1,\rho_2)$ let $\mathcal{I}(\ell)$ denote the
collection of all $m\in\NN$ for which $B_{\ell,m}\cap\Omega(\rho_1,\rho_2)$ is
non-empty. Then by~\eqref{eq:pointwise-Lip-est}
and~\eqref{boundary-Ahlfors-regularity},
\begin{align*}
\int_{\Om(\rho_1,\rho_2)}\Lip F\, d\mu
  &\le \sum_{\ell\in\mathcal{J}(\rho_1,\rho_2)}\sum_{m\in\mathcal{I}(\ell)}\int_{B_{\ell,m}}\Lip F\, d\mu\\
  \le C \sum_{\ell\in\mathcal{J}(\rho_1,\rho_2)}&\sum_{m\in\mathcal{I}(\ell)}\frac{\mu(B_{\ell,m})}{r_{\ell,m}}\,
          \fint_{U^*_{\ell,m}}\fint_{U^*_{\ell,m}}  \left|f(z)-f(w)\right|\,d\mathcal{H}(w)\,d\mathcal{H}(z)\\
   \le C \sum_{\ell\in\mathcal{J}(\rho_1,\rho_2)}&\sum_{m\in\mathcal{I}(\ell)} \hcal(U_{\ell,m})
        \fint_{U^*_{\ell,m}}\fint_{U^*_{\ell,m}}  \left|f(z)-f(w)\right|\,d\mathcal{H}(w)\,d\mathcal{H}(z)\\
   = C \sum_{\ell\in\mathcal{J}(\rho_1,\rho_2)}&\sum_{m\in\mathcal{I}(\ell)}
        \int_{U^*_{\ell,m}}\fint_{U^*_{\ell,m}}  \left|f(z)-f(w)\right|\,d\mathcal{H}(w)\,d\mathcal{H}(z)\\
    \le C \sum_{\ell\in\mathcal{J}(\rho_1,\rho_2)}&\sum_{m\in\mathcal{I}(\ell)}
        \int_{U^*_{\ell,m}}\fint_{B(z,2^{7+\ell})}|f(z)-f(w)|\, d\mathcal{H}(w)\,d\mathcal{H}(z)\\
    \le C \sum_{\ell\in\mathcal{J}(\rho_1,\rho_2)}&\int_{\dOm}\fint_{B(z,2^{7+\ell})}|f(z)-f(w)|\, d\mathcal{H}(w)\,d\mathcal{H}(z).
\qedhere
\end{align*}
\end{proof}
\begin{corollary}
\label{cor:layer-est-grad-lip} %
Using the notation of Lemma~\ref{layer-est-grad}, we have that
\[
  \int_{\Om(\rho_1,\rho_2)}\Lip F\, d\mu\le C \rho_2 \hcal(\dOm) \LIP(f,\dOm)
\]
whenever $f$ is Lipschitz on $\dOm$.
\end{corollary}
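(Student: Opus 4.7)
The plan is to plug the Lipschitz estimate on $f$ directly into the conclusion of Lemma~\ref{layer-est-grad}. Since $|f(z)-f(w)| \le \LIP(f,\dOm)\,d(z,w)$ and $w \in B(z,2^{7+\ell})$ forces $d(z,w) < 2^{7+\ell}$, the inner average is controlled by $2^{7+\ell}\LIP(f,\dOm)$. Integrating over $z \in \dOm$ contributes a factor of $\hcal(\dOm)$. Thus the lemma reduces the claim to verifying
\[
  \sum_{\ell \in \mathcal{J}(\rho_1,\rho_2)} 2^{\ell} \le C\rho_2,
\]
with a constant independent of the particular layer.

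The key observation is that membership in $\mathcal{J}(\rho_1,\rho_2)$ forces $2^{\ell}$ to be comparable to, or smaller than, $\rho_2$. Indeed, if $B_{\ell,m}$ meets $\Om(\rho_1,\rho_2)$ at some point $x$, then by the triangle inequality
\[
  8 r_{\ell,m} = \dist(p_{\ell,m},X\setminus\Om) \le \dist(x,X\setminus\Om) + d(p_{\ell,m},x) < \rho_2 + r_{\ell,m},
\]
so $r_{\ell,m} < \rho_2/7$, and hence $2^{\ell-1} < r_{\ell,m} < \rho_2/7$. This gives an upper bound $\ell \le \ell_{\max}$ with $2^{\ell_{\max}} \le C\rho_2$, and summing the resulting geometric series in $\ell$ yields $\sum_{\ell \in \mathcal{J}(\rho_1,\rho_2)} 2^{7+\ell} \le C\rho_2$.

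Combining the two ingredients, Lemma~\ref{layer-est-grad} gives
\[
  \int_{\Om(\rho_1,\rho_2)}\Lip F\, d\mu \le C\,\LIP(f,\dOm)\,\hcal(\dOm) \sum_{\ell \in \mathcal{J}(\rho_1,\rho_2)} 2^{7+\ell}
  \le C\rho_2\,\hcal(\dOm)\,\LIP(f,\dOm),
\]
which is the desired estimate. There is no real obstacle here: the only point requiring any care is the geometric-series bound on the indices $\ell \in \mathcal{J}(\rho_1,\rho_2)$, which follows from the defining relation $r_{\ell,m} = \tfrac{1}{8}\dist(p_{\ell,m},X\setminus\Om)$ of the Whitney balls together with the layer constraint $\dist(x,X\setminus\Om) < \rho_2$.
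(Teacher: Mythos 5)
Your proof is correct and follows essentially the same route as the paper: bound the inner average by $2^{7+\ell}\LIP(f,\dOm)$ using the Lipschitz condition, then reduce to the geometric-series estimate $\sum_{\ell\in\mathcal{J}(\rho_1,\rho_2)}2^{\ell}\le C\rho_2$, which you justify (slightly more explicitly than the paper) via the Whitney relation $8r_{\ell,m}=\dist(p_{\ell,m},X\setminus\Om)$ and the layer constraint.
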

\begin{proof}
For a fixed $\ell \in \ZZ$, we can estimate
\begin{align*}
  \int_{\dOm}&\fint_{B(z,2^{7+\ell})}|f(z)-f(w)|\, d\hcal(w)\,d\hcal(z) \\
  & \le \int_{\dOm}\fint_{B(z,2^{7+\ell})}\LIP(f,\dOm) \dd(z,w) \, d\hcal(w)\,d\hcal(z) \\
  & \le C \hcal(\dOm) \, \LIP(f,\dOm)\,  2^{7+\ell}\,.
\end{align*}
Therefore,
\[
\int_{\Om(\rho_1,\rho_2)}\Lip F\, d\mu\, \le\, C \hcal(\dOm) \LIP(f,\dOm) \, \sum_{\ell\in\mathcal{J}(\rho_1,\rho_2)} 2^\ell.
\]
Every ball $B=B(p,r) \in \mathcal{I}(\ell)$ satisfies $2^{\ell-1} < r \le
2^\ell$ and  $\dist(p, X\setminus \Om) = 8 r$. There is $C\ge1$ such that
$C^{-1} \rho_1 \le 2^\ell \le C \rho_2$ whenever $\ell \in \mathcal{J}(\rho_1,
\rho_2)$. Thus, $\sum_{\ell\in\mathcal{J}(\rho_1,\rho_2)} 2^\ell \le C \rho_2$.
\end{proof}
We next turn our attention to the $L^1$-estimates for $F$.
\begin{lemma}
\label{lem:L1-est_Whitney}
There exists $C>0$ such that
\[\int_\Omega |F|\,d\mu \leq C\diam(\Om) \|f\|_{L^1(\dOm)}.\]
\end{lemma}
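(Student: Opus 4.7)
The plan is to exploit the definition $F=\sum_{j,i}a_{j,i}\phi_{j,i}$, the fact that $\phi_{j,i}\le\chi_{2B_{j,i}}$, and the codimension-$1$ Ahlfors regularity of $\dOm$ to convert a sum of volumes of Whitney balls into a sum of $\hcal$-measures of boundary pieces. First, by the triangle inequality $|a_{j,i}|\le\fint_{U_{j,i}}|f|\,d\hcal$, and integrating the bound $|F(x)|\le\sum_{j,i}|a_{j,i}|\phi_{j,i}(x)$ term by term gives
\[
\int_\Om|F|\,d\mu \;\le\; \sum_{j,i}|a_{j,i}|\int_\Om\phi_{j,i}\,d\mu \;\le\; \sum_{j,i}\mu(2B_{j,i})\fint_{U_{j,i}}|f|\,d\hcal.
\]

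Next I would replace $\mu(2B_{j,i})$ by a boundary quantity. Since $d(p_{j,i},q_{j,i})=8r_{j,i}$, the ball $B_{j,i}$ is contained in $B(q_{j,i},9r_{j,i})$ and contains (up to doubling) a comparable ball centered at $q_{j,i}\in\dOm$; hence $\mu(2B_{j,i})\approx \mu(B(q_{j,i},r_{j,i}))$. Applying the upper bound in the codimension-$1$ Ahlfors regularity~\eqref{boundary-Ahlfors-regularity} at the boundary point $q_{j,i}$ with radius $r_{j,i}$ gives
\[
\mu(2B_{j,i}) \;\le\; C\,r_{j,i}\,\hcal(U_{j,i}).
\]
Plugging this in and cancelling the $\hcal(U_{j,i})$ in the denominator of the average yields
\[
\int_\Om|F|\,d\mu \;\le\; C\sum_{j,i} r_{j,i}\int_{U_{j,i}}|f|\,d\hcal \;\le\; C\sum_{j\le j_0} 2^{j}\sum_{i}\int_{U_{j,i}}|f|\,d\hcal.
\]

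To finish, I need bounded overlap of the family $\{U_{j,i}\}_i$ on $\dOm$ for each fixed level $j$. This is the one step that requires a short verification: if $z\in U_{j,i}\cap U_{j,i'}$ then $d(p_{j,i},p_{j,i'})\le 8r_{j,i}+r_{j,i}+r_{j,i'}+8r_{j,i'}\le C\cdot 2^{j}$, so by the Whitney covering property the number of indices $i'$ sharing such a point with a given $i$ is bounded by a dimensional constant (depending on $C_D$). Consequently $\sum_i\int_{U_{j,i}}|f|\,d\hcal\le C\|f\|_{L^1(\dOm)}$ for every $j$.

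Summing the geometric series $\sum_{j\le j_0}2^{j}\le 2^{j_0+1}\approx\diam(\Om)$ then gives the desired inequality
\[
\int_\Om|F|\,d\mu \;\le\; C\,\diam(\Om)\,\|f\|_{L^1(\dOm)}.
\]
The main—and really only—obstacle is the bounded-overlap verification for the projected boundary sets $U_{j,i}$, but it is immediate from the Whitney separation estimate together with the codimension-$1$ Ahlfors regularity; all other steps are routine pointwise estimates and applications of doubling.
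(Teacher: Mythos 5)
Your proof is correct and follows essentially the same route as the paper's: bound $|a_{j,i}|$ by the average of $|f|$ over $U_{j,i}$, convert $\mu(B_{j,i})$ into $r_{j,i}\,\hcal(U_{j,i})$ via the codimension-$1$ regularity at $q_{j,i}$, invoke bounded overlap of the boundary sets at each level, and sum the geometric series $\sum_{j\le j_0}2^j\approx\diam(\Om)$. The only cosmetic difference is that the paper organizes the estimate ball-by-ball over the Whitney cover (which is why it passes to the inflated sets $U_{\ell,m}^*$), whereas you sum directly over all indices using $\int_\Om\phi_{j,i}\,d\mu\le\mu(2B_{j,i})$; your bounded-overlap verification for the $U_{j,i}$ at a fixed level is the right supplement for that reorganization (and in fact needs only doubling and the Whitney overlap bound, not the Ahlfors regularity).
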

\begin{proof}
We first consider a fixed ball $B_{\ell, m}$ from the Whitney cover. Then
\begin{align*}
\int_{B_{\ell, m}}|F(x)|\,d\mu(x) &=\int_{B_{\ell, m}}\left|\sum_{j,i} \fint_{U_{j,i}}f(y)\,d\hcal(y)\phi_{j,i}(x)\right|\,d\mu(x)\\
&\leq\int_{B_{\ell, m}}\sum_{j,i}\left|\fint_{U_{j,i}}f(y)\,d\hcal(y)\right|\phi_{j,i}(x)\,d\mu(x)\\
&=\int_{B_{\ell, m}}  \sum_{\substack{j,i \, \textrm{ s.t.} \notag\\
    2B_{j,i}\cap B_{\ell,m}\neq \emptyset}}\left|\fint_{U_{j,i}}f(y)\,d\hcal(y)\right|\phi_{j,i}(x)\,d\mu(x).
\end{align*}

Recall that if $2B_{j,i}\cap B_{\ell,m}\neq \emptyset$, then $|j-\ell|\le 1$,
so $\mathcal{H}(U_{j,i})\approx \mathcal{H}(U_{\ell,m}^*)$. Also, for $U_{\ell,
m}^*$ as defined in equation~\eqref{eq:expandedsubset}, $U_{j,i}\subset
U_{\ell,m}^*$. Furthermore, by the construction of the Whitney decomposition,
each point is in a fixed number of dilated Whitney balls $2B_{j,i}$. Hence,
\begin{align*}
 \int_{B_{\ell, m}}  \sum_{\substack{j,i \,\textrm{ s.t.} \\
    2B_{j,i}\cap B_{\ell,m}\neq \emptyset}}\biggl|\fint_{U_{j,i}} f(y)\,d\hcal(y)\biggr| \phi_{j,i}(x)\,d\mu(x)&\\
\leq C \int_{B_{\ell, m}} \fint_{U_{\ell,m}^*}|f(y)|\,d\hcal(y)\, d\mu(x)
\leq& C \mu(B_{\ell, m})  \fint_{U_{\ell,m}^*}|f(y)|\,d\hcal(y).
\end{align*}
In view of~\eqref{boundary-Ahlfors-regularity}, we obtain that
\begin{equation}\label{eq:Ball-F-est}
\int_{B_{\ell, m}}|F(x)|\,d\mu(x)\le C\,r_{\ell,m} \int_{U_{\ell,m}^*}|f(y)|\,d\hcal(y).
\end{equation}
Summing up and noting that $\Omega=\bigcup_{\ell,m}B_{\ell,m}$, we have
\begin{align*}
  \int_\Omega |F|\,d\mu
      \leq C \sum_{\ell=-\infty}^{j_0}\sum_{m} r_{\ell,m} \int_{U_{\ell,m}^*}\left|f\right|\,d\hcal
      \leq C \sum_{\ell=-\infty}^{j_0}&2^{\ell} \sum_{m} \int_{U_{\ell,m}^*}\left|f\right|\,d\hcal \\
      \leq C \sum_{\ell=-\infty}^{j_0}2^{\ell}  \int_{\partial\Omega}\left|f\right|\,d\hcal
      \le& C \diam(\Om) \|f\|_{L^1(\partial\Omega)}.
  \pushQED{}\tag*{$\square$}
\end{align*}
\end{proof}
We now aim to obtain an analog of Lemma~\ref{layer-est-grad} for the $L^1$-norm
of $F$ on the layer $\Om(\rho_1,\rho_2)$.
\begin{lemma}\label{layer-est-Fn-boundaryball}
Let $z \in \dOm$ and $r \in (0, \diam(\Om)/2)$. Then,
\[
\int_{B(z,r)\cap \Om(\rho_1,\rho_2)}|F|\, d\mu\le C \, \min\{r, \rho_2\}\, \int_{B(z, 2^8r) \cap \dOm}|f|\, d\mathcal{H}
\]
whenever $0\le \rho_1 < \min\{r, \rho_2\}$ and $\rho_2 < \diam(\Om)/2$.
\end{lemma}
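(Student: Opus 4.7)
The plan is to mimic the proof of Lemma~\ref{lem:L1-est_Whitney}, but restricted to those Whitney balls that actually meet $B(z,r)\cap\Om(\rho_1,\rho_2)$. The key gain must come from exploiting (i) that only Whitney balls of radius $\lesssim \min\{r,\rho_2\}$ can intersect this region, and (ii) that their associated boundary pieces $U^*_{\ell,m}$ all lie inside a single boundary ball $B(z,2^8 r)\cap\dOm$.

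First I would establish the geometric lemmas on the Whitney scale. Suppose $B_{\ell,m}\cap B(z,r)\cap\Om(\rho_1,\rho_2)\neq\emptyset$. Because $B_{\ell,m}$ contains a point $x$ with $\dist(x,X\setminus\Om)<\rho_2$ and because $\dist(p_{\ell,m},X\setminus\Om)=8r_{\ell,m}$, the triangle inequality gives $8r_{\ell,m}-r_{\ell,m}<\rho_2$, hence $r_{\ell,m}<\rho_2/7$. Similarly, since $z\in\dOm$ and $B_{\ell,m}\cap B(z,r)\neq\emptyset$, we get $8r_{\ell,m}\le d(p_{\ell,m},z)<r+r_{\ell,m}$, so $r_{\ell,m}<r/7$. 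Combining, $r_{\ell,m}\lesssim\min\{r,\rho_2\}$. Moreover, $d(q_{\ell,m},z)\le d(q_{\ell,m},p_{\ell,m})+d(p_{\ell,m},z)<8r_{\ell,m}+r+r_{\ell,m}$, so for any $y\in U^*_{\ell,m}$ we have $d(y,z)<2^6r_{\ell,m}+9r_{\ell,m}+r<2^8r$, giving $U^*_{\ell,m}\subset B(z,2^8r)\cap\dOm$.

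Next I would assemble the bound. Let $\mathcal{S}$ be the collection of all $(\ell,m)$ with $B_{\ell,m}\cap B(z,r)\cap\Om(\rho_1,\rho_2)\neq\emptyset$. Since the Whitney balls cover $\Om$,
\[
\int_{B(z,r)\cap\Om(\rho_1,\rho_2)}|F|\,d\mu\ \le\ \sum_{(\ell,m)\in\mathcal{S}}\int_{B_{\ell,m}}|F|\,d\mu\ \le\ C\sum_{(\ell,m)\in\mathcal{S}} r_{\ell,m}\int_{U^*_{\ell,m}}|f|\,d\hcal,
\]
the second inequality being exactly \eqref{eq:Ball-F-est}. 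Grouping by level $\ell$, using $r_{\ell,m}\le 2^\ell$, and invoking the uniformly bounded overlap of $\{U^*_{\ell,m}\}_m$ at each fixed $\ell$ (the same fact used at the end of the proof of Proposition~\ref{prop:extnBounds}) together with $U^*_{\ell,m}\subset B(z,2^8r)\cap\dOm$, yields
\[
\sum_{(\ell,m)\in\mathcal{S}} r_{\ell,m}\int_{U^*_{\ell,m}}|f|\,d\hcal\ \le\ C\!\!\sum_{\ell:\,2^\ell\lesssim\min\{r,\rho_2\}}\!\!2^\ell\int_{B(z,2^8 r)\cap\dOm}|f|\,d\hcal .
\]

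Finally, the geometric series bound $\sum_{\ell:\,2^\ell\lesssim\min\{r,\rho_2\}}2^\ell\le C\min\{r,\rho_2\}$ gives the desired estimate. The one genuinely technical step, and the only real obstacle, is the Step~1 bookkeeping that shows $r_{\ell,m}\lesssim\min\{r,\rho_2\}$ and $U^*_{\ell,m}\subset B(z,2^8r)\cap\dOm$ uniformly in $\mathcal{S}$; everything afterwards is a routine repetition of the summation scheme already established in Lemma~\ref{lem:L1-est_Whitney} and Proposition~\ref{prop:extnBounds}.
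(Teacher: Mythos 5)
Your proposal is correct and follows essentially the same route as the paper's proof: restrict to the Whitney balls meeting $B(z,r)\cap\Om(\rho_1,\rho_2)$, apply~\eqref{eq:Ball-F-est}, show $r_{\ell,m}\lesssim\min\{r,\rho_2\}$ and $U^*_{\ell,m}\subset B(z,2^8r)$ by the triangle inequality, and sum the geometric series using the bounded overlap of the $U^*_{\ell,m}$ at each level. The only cosmetic difference is that the paper first reduces to the case $\rho_2\le r$ via $B(z,r)\cap\Om(\rho_1,\rho_2)=B(z,r)\cap\Om(\rho_1,\min\{r,\rho_2\})$, whereas you bound $r_{\ell,m}$ by $r/7$ and $\rho_2/7$ separately.
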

\begin{proof}
Since $B(z,r)\cap \Om(\rho_1,\rho_2) = B(z,r)\cap
\Om(\rho_1,\min\{r,\rho_2\})$, we do not lose any generality by assuming that
$\rho_2 \le r$. Similarly as in the proof of Lemma~\ref{layer-est-grad}, we set
$\mathcal{J}^\prime(\rho_1,\rho_2)$ to be the collection of all $\ell$ for
which there is some $m$ such that $B_{\ell,m}\cap\Omega(\rho_1,\rho_2)\cap
B(z,r)$ is non-empty, and for each $\ell\in\mathcal{J}^\prime(\rho_1,\rho_2)$
we set $\mathcal{I}'(\ell)$ to be the collection of all $m\in\NN$ for which
$B_{\ell,m}\cap\Omega(\rho_1,\rho_2) \cap B(z,r)$ is non-empty. Then
by~\eqref{eq:Ball-F-est},
\begin{align*}
\int_{B(z,r) \cap\Om(\rho_1,\rho_2) } |F|\, d\mu
  & \le \sum_{\ell\in \mathcal{J}^\prime(\rho_1,\rho_2)}\sum_{m\in\mathcal{I}'(\ell)}\int_{B_{\ell,m}}|F|\, d\mu\\
  & \le C \sum_{\ell\in \mathcal{J}^\prime(\rho_1,\rho_2)}\sum_{m\in\mathcal{I}'(\ell)} r_{\ell,m}\int_{U^*_{\ell,m}}|f|\, d\mathcal{H}.
\end{align*}
The triangle inequality yields that
\[
  d(z, q_{\ell, m}) \le d(z, p_{\ell,m}) + d(p_{\ell,m}, q_{\ell, m}) \le 2 d(z, p_{\ell,m}) \le 2(r + r_{\ell,m}),
\]
where $B_{\ell, m} = B(p_{\ell,m}, r_{\ell,m})$ and $U_{\ell, m} =
B(q_{\ell,m}, r_{\ell,m}) \cap \dOm$ with $q_{\ell,m} \in \dOm$ being a
boundary point lying closest to $p_{\ell,m}$. Moreover, $8 r_{\ell,m} =
\dist(p_{\ell,m}, X\setminus \Om) \le d(p_{\ell,m}, z) \le r+ r_{\ell,m}$.
Hence, $r_{\ell,m} \le \tfrac17 r$. Consequently, $d(z, q_{\ell,m}) \le
\tfrac{16}{7} r$ and $U_{\ell, m} \subset B(z, (\tfrac{16}{7} + \tfrac17)r)$.
Thus, $U^*_{\ell, m} \subset B(z, 2^8 r)$ and
\begin{align*}
  \int_{B(z,r) \cap\Om(\rho_1,\rho_2) } |F|\, d\mu
  & \le C\sum_{\ell\in \mathcal{J}^\prime(\rho_1,\rho_2)}2^{\ell}\, \int_{B(z, 2^8 r) \cap \dOm}|f|\, d\mathcal{H} \\
  & \le C \rho_2 \int_{B(z, 2^8 r) \cap \dOm}|f|\, d\mathcal{H},
\end{align*}
where the last inequality can be verified as follows: Every ball $B=B(p,r) \in
\mathcal{I}^\prime(\ell)$ satisfies $2^{\ell-1} < r \le 2^\ell$ and  $\dist(p,
X\setminus \Om) = 8 r$. There is $C\ge1$ such that $C^{-1} \rho_1 \le 2^\ell
\le C \rho_2$ whenever $\ell\in\mathcal{J}^\prime(\rho_1,\rho_2)$. Thus,
$\sum_{\ell\in\mathcal{J}^\prime(\rho_1,\rho_2)} 2^\ell \le C \rho_2$.
\end{proof}

By covering $\dOm$ by balls of radii $r$, whose overlap is bounded, we obtain
the following corollary.

\begin{corollary}\label{layer-est-Fn}
With the notation of Lemma~\ref{layer-est-grad}, we have
\[
\int_{\Om(\rho_1,\rho_2)}|F|\, d\mu\le C \, \rho_2\, \int_{\dOm}|f|\, d\mathcal{H}.
\]
\end{corollary}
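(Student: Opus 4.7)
The plan is to deduce the estimate from Lemma~\ref{layer-est-Fn-boundaryball} by covering a neighborhood of $\dOm$ that contains $\Om(\rho_1,\rho_2)$ with balls of radius comparable to $\rho_2$ centered on $\dOm$, and then summing the per-ball estimates.

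First, I would reduce to the range $\rho_2<\diam(\Om)/4$: in the complementary range $\rho_2\in[\diam(\Om)/4,\diam(\Om)/2)$ one has $\diam(\Om)\le 4\rho_2$, so Lemma~\ref{lem:L1-est_Whitney} already gives $\int_{\Om(\rho_1,\rho_2)}|F|\,d\mu\le C\diam(\Om)\|f\|_{L^1(\dOm)}\le C\rho_2\|f\|_{L^1(\dOm)}$. Under the reduction, a standard Vitali-type selection produces a countable set $\{z_k\}\subset\dOm$ such that the balls $\{B(z_k,\rho_2/5)\}$ are pairwise disjoint in $X$ and $\dOm\subset\bigcup_k B(z_k,\rho_2)$. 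The doubling of $\mu$ then ensures that the inflated balls $\{B(z_k,2^9\rho_2)\}$ have overlap number bounded by a constant depending only on $C_D$.

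Next, for each $x\in\Om(\rho_1,\rho_2)$ I would invoke the (implicit) existence of a closest boundary point $q_x\in\dOm$, as used throughout the Whitney construction; by $\dist(x,X\setminus\Om)<\rho_2$ we have $d(x,q_x)\le\rho_2$. Picking $z_k$ with $d(z_k,q_x)<\rho_2$ yields $x\in B(z_k,2\rho_2)$, so
\[
\Om(\rho_1,\rho_2)\subset\bigcup_k B(z_k,2\rho_2)\cap\Om(\rho_1,\rho_2).
\]

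Finally, applying Lemma~\ref{layer-est-Fn-boundaryball} on each piece with center $z_k\in\dOm$ and radius $2\rho_2<\diam(\Om)/2$, then summing over $k$ and using the bounded overlap of $\{B(z_k,2^9\rho_2)\}$, gives
\[
\int_{\Om(\rho_1,\rho_2)}|F|\,d\mu\le\sum_k C\rho_2\int_{B(z_k,2^9\rho_2)\cap\dOm}|f|\,d\hcal\le C\rho_2\int_{\dOm}|f|\,d\hcal,
\]
which is the desired inequality. The only step requiring any care is the bounded-overlap claim for the inflated balls, which is a routine packing argument using the doubling of $\mu$ together with~\eqref{boundary-Ahlfors-regularity}.
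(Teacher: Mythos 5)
Your proof is correct and follows the same route the paper intends: the paper derives this corollary in a single sentence (``by covering $\dOm$ by balls of radii $r$, whose overlap is bounded''), and your argument---covering $\dOm$ by boundary-centered balls of radius comparable to $\rho_2$ whose inflations have bounded overlap, applying Lemma~\ref{layer-est-Fn-boundaryball} on each piece with $r=2\rho_2$, and summing---is precisely the intended expansion of that sentence. Your preliminary case split at $\rho_2\ge\diam(\Om)/4$, handled via Lemma~\ref{lem:L1-est_Whitney}, correctly disposes of the radius restriction $r<\diam(\Om)/2$ in that lemma.
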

\subsection{Trace of extension is the identity mapping}
From the above lemma we know that given a function $f\in \Be^0(\dOm)$ the
corresponding function $F$ is in the class $N^{1,1}(\Om)\subset BV(\Om)$, where
$N^{1,1}(\Om)$ is a Newtonian class introduced in~\cite{S}. The mapping
$f\mapsto F$ is denoted by the operator $E:\Be^0(\dOm)\to BV(\Om)$. This
operator is bounded by Proposition~\ref{prop:extnBounds} and
Lemma~\ref{lem:L1-est_Whitney}, and it is linear by construction.

We now wish to show that the trace of $F$ returns the original function $f$,
i.e., $T\circ E$ is the identity function on $\Be^0(\dOm)$. It was shown
in~\cite{LS} that if $\Om$ satisfies our standing assumptions, then for each
$u\in BV(\Om)$ and for $\mathcal{H}$-a.e.\@ $z\in\dOm$ there is a number
$Tu(z)\in\R$ such that
\begin{equation}\label{defoftrace}
  \limsup_{r\to0^+}\fint_{B(z,r)\cap\Om}|u(y)-Tu(z)|\, d\mu(y)=0.
\end{equation}
The map $u\mapsto Tu$ is called the \emph{trace} of $BV(\Om)$. Moreover, if
$\Om$ supports a $1$-Poincar\'e inequality, then $Tu\in L^1(\dOm)$ for $u\in
BV(\Om)$.

Note also that $\Be^0(\dOm)\subset L^1(\dOm)$ and the inclusion is strict in
general, which is shown in Example~\ref{exa:Bes0!=L1} below. Further properties
of the Besov classes are explored in Section~\ref{sec:spaces}.

For the sake of clarity, let us explicitly point out that the following lemma
shows that the BV extension of a function of the Besov class $B^0_{1,1}(\dOm)$,
as constructed above, has a well-defined trace even though no Poincar\'e
inequality for $\Om$ or for $X$ is assumed.
\begin{lemma}\label{trace-Besov}
For $f$ and $F$ as above, and for $\hcal$-a.e.\@ $z\in \partial\Omega$,
\[\lim_{r\to 0^+} \fint_{B(z,r)\cap\Omega}|F(x)-f(z)|\,d\mu(x)=0.\]
That is, $TEf(z)$ exists for $\hcal$-a.e.~$z\in\dOm$.
\end{lemma}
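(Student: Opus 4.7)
The plan is to fix $z\in\dOm$ to be a Lebesgue point of $f$ with respect to $\hcal$---this covers $\hcal$-a.e.\ $z$, since $\hcal$ is doubling on $\dOm$ by~\eqref{boundary-Ahlfors-regularity} together with doubling of $\mu$---and to show directly that
\begin{equation*}
 \fint_{B(z,r)\cap\Om}|F-f(z)|\,d\mu \le \frac{C\,\mu(B(z,r))}{\mu(B(z,r)\cap\Om)}\,\eta(4r),
\end{equation*}
where $\eta(s):=\fint_{B(z,s)\cap\dOm}|f-f(z)|\,d\hcal\to 0$ as $s\to 0^+$ by the Lebesgue differentiation theorem on the doubling space $(\dOm,d,\hcal)$.

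For the integral estimate, I will use $\sum_{j,i}\phi_{j,i}\equiv\chi_\Om$ to write $F(x)-f(z)=\sum_{j,i}(a_{j,i}-f(z))\phi_{j,i}(x)$ for $x\in\Om$. Only Whitney balls with $2B_{j,i}\cap B(z,r)\ne\emptyset$ contribute on $B(z,r)\cap\Om$; for any such $B_{j,i}$, the chain $8r_{j,i}=\dist(p_{j,i},X\setminus\Om)\le d(p_{j,i},z)\le r+2r_{j,i}$ forces $r_{j,i}<r/6$, whence $q_{j,i}\in B(z,3r)$ and $U_{j,i}\subset B(z,4r)\cap\dOm$. Using $|a_{j,i}-f(z)|\le\fint_{U_{j,i}}|f-f(z)|\,d\hcal$, $\mu(2B_{j,i})\le Cr_{j,i}\hcal(U_{j,i})$ from~\eqref{boundary-Ahlfors-regularity}, and swapping the order of summation and integration, I obtain
\begin{equation*}
 \int_{B(z,r)\cap\Om}|F-f(z)|\,d\mu \le C\int_{\dOm}|f(y)-f(z)|\,S_r(y)\,d\hcal(y),
\end{equation*}
with $S_r(y):=\sum\{r_{j,i}:\,y\in U_{j,i},\,2B_{j,i}\cap B(z,r)\ne\emptyset\}$. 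For each fixed $y$, the condition $y\in U_{j,i}$ places $p_{j,i}$ in $B(y,9r_{j,i})$, and a doubling/packing argument gives $O(1)$ such Whitney balls at each scale $2^\ell$; summing the resulting geometric series over $\ell\le\log_2(r/6)+O(1)$ yields $S_r(y)\le Cr$. Since $\bigcup U_{j,i}\subset B(z,4r)\cap\dOm$, this leads to $\int_{B(z,r)\cap\Om}|F-f(z)|\,d\mu\le Cr\int_{B(z,4r)\cap\dOm}|f-f(z)|\,d\hcal$, and dividing by $\mu(B(z,r)\cap\Om)$ while invoking $r\,\hcal(B(z,4r)\cap\dOm)\le C\mu(B(z,r))$ (from~\eqref{boundary-Ahlfors-regularity} and doubling) produces the displayed inequality.

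The main obstacle will be the density factor $\mu(B(z,r))/\mu(B(z,r)\cap\Om)$: no Poincar\'e inequality or explicit measure-density condition is assumed, and the indeterminate form $\infty\cdot 0$ in the bound must be ruled out at $\hcal$-a.e.\ $z$. I expect this to follow from the observation that the exceptional set $\{z\in\dOm:\limsup_{r\to 0^+}\mu(B(z,r))/\mu(B(z,r)\cap\Om)=\infty\}$ is $\hcal$-negligible: a Vitali-type covering of that set, combined with both the upper and lower inequalities in~\eqref{boundary-Ahlfors-regularity}, leads to an overcount of boundary measure incompatible with the finiteness of $\hcal(\dOm)$ that comes from boundedness of $\Om$. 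At the remaining $\hcal$-a.e.\ $z$ the density factor is bounded, so $\eta(4r)\to 0$ forces $\fint_{B(z,r)\cap\Om}|F-f(z)|\,d\mu\to 0$, as required.
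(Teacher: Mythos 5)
Your argument tracks the paper's own proof of Lemma~\ref{trace-Besov} almost exactly through the main estimate: the paper likewise fixes a Lebesgue point $z$ of $f$ with respect to the doubling measure $\hcal\vert_{\dOm}$, writes $F-f(z)=\sum_{j,i}(a_{j,i}-f(z))\phi_{j,i}$, observes that only Whitney balls with $2B_{j,i}\cap B(z,r)\ne\emptyset$ contribute and that the associated boundary sets lie in $B(z,2^7r)\cap\dOm$, and then uses the bounded overlap of the boundary sets at each Whitney level together with $\sum_{\ell}2^{\ell}\le Cr$ to arrive at
\[
\int_{B(z,r)\cap\Om}|F-f(z)|\,d\mu\le C\,r\int_{B(z,2^7r)\cap\dOm}|f-f(z)|\,d\hcal,
\]
which is your estimate up to the value of the dilation constant; your kernel $S_r(y)$ and the per-scale packing bound are a cosmetic rearrangement of the paper's level-by-level summation over $\ell\in\mathcal{J}(B(z,r))$ and $m\in\mathcal{I}(\ell)$. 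Up to this point your geometric claims ($r_{j,i}<r/6$, $U_{j,i}\subset B(z,4r)$, $\mu(2B_{j,i})\le Cr_{j,i}\hcal(U_{j,i})$) are all correct.

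The divergence, and the genuine gap, is in the final step. The paper divides by $\mu(B(z,r)\cap\Om)$ and passes directly to $C\fint_{B(z,2^7r)\cap\dOm}|f-f(z)|\,d\hcal$, i.e.\ it absorbs the factor $r\,\hcal(B(z,2^7r)\cap\dOm)/\mu(B(z,r)\cap\Om)$ into the constant rather than routing through the ratio $\mu(B(z,r))/\mu(B(z,r)\cap\Om)$ as a separate object to be controlled. You isolate that ratio and propose to kill the exceptional set $\{z:\limsup_{r\to0^+}\mu(B(z,r))/\mu(B(z,r)\cap\Om)=\infty\}$ by a Vitali covering; as sketched, this does not close. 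Covering $A_\eps=\{z\in\dOm:\liminf_{r\to0^+}\mu(B(z,r)\cap\Om)/\mu(B(z,r))<\eps\}$ by disjointed balls $B(z_i,r_i)$ with $\mu(B(z_i,r_i)\cap\Om)<\eps\,\mu(B(z_i,r_i))$ and applying~\eqref{boundary-Ahlfors-regularity} yields only $\hcal(A_\eps)\le C\sum_i\mu(B(z_i,5r_i))/r_i\le C\sum_i\hcal(B(z_i,r_i)\cap\dOm)\le C\hcal(\dOm)$, an estimate in which $\eps$ never appears, so no contradiction with the finiteness of $\hcal(\dOm)$ arises. The obstruction is structural: both inequalities in~\eqref{boundary-Ahlfors-regularity} relate $\hcal(B\cap\dOm)$ to $\mu(B)$, never to $\mu(B\cap\Om)$, so smallness of the interior measure cannot be converted into an overcount of boundary measure (a single outward cusp already shows the ratio can blow up at individual points without any tension with Ahlfors regularity). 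To finish you must either prove the a.e.\ interior density statement by some other means or, as the paper does, establish the single inequality $r\,\hcal(B(z,Cr)\cap\dOm)\le C\,\mu(B(z,r)\cap\Om)$ at $\hcal$-a.e.\ $z$; as written, the convergence $\fint_{B(z,r)\cap\Om}|F-f(z)|\,d\mu\to0$ is not yet proved.
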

\begin{proof}
Since $f\in \Be^0(\dOm)\subset L^1(\dOm)$, we know by the doubling property of
$\mathcal{H}\vert_{\dOm}$ that $\mathcal{H}$-a.e.\@ $z\in\dOm$ is a Lebesgue
point of $f$. Let $z$ be such a point. Since
$\smash{\sum_{j,i}}\phi_{j,i}=\chi_\Om$, we have
\begin{align*}
  \fint_{B(z,r)\cap\Omega}|F-f(z)|\,d\mu & =
  \fint_{B(z,r)\cap\Omega}\left|\sum_{j,i} \left(\fint_{U_{j,i}}f\,d\hcal\right)\phi_{j,i}(x)-f(z)\right|\,d\mu(x) \\
  &=  \fint_{B(z,r)\cap\Omega}\left|\sum_{j,i} \left(\fint_{U_{j,i}}(f-f(z))\,d\hcal\right)\phi_{j,i}(x)\right|\,d\mu(x) \\
  &\le   \fint_{B(z,r)\cap\Omega}\sum_{j,i} \left(\fint_{U_{j,i}}\left|f-f(z)\right|\,d\hcal\right)\phi_{j,i}(x)\,d\mu(x)\,.
\end{align*}
By the properties of the Whitney covering $\wcal_\Om$,
\begin{align*}
\int_{B(z,r)\cap\Om}&\sum_{j,i} \left(\fint_{U_{j,i}}\left|f-f(z)\right|\,d\hcal\right)\phi_{j,i}(x)\,d\mu(x) \\
  & \le \sum_{\substack{\ell,m\, \textrm{ s.t.}\\ B_{\ell,m}\cap B(z,r)\ne\emptyset}} \int_{B_{\ell,m}}
     \sum_{j,i} \left(\fint_{U_{j,i}}\left|f-f(z)\right|\,d\hcal\right)\phi_{j,i}(x)\,d\mu(x)\\
  & \le \sum_{\substack{\ell,m\, \textrm{ s.t.}\\B_{\ell,m}\cap B(z,r)\ne\emptyset}} \int_{B_{\ell,m}}
     \sum_{\substack{j,i\,\textrm{ s.t.} \\ 2B_{j,i}\cap B_{\ell,m}\ne\emptyset}} \left(\fint_{U_{j,i}}\left|f-f(z)\right|\,d\hcal\right)\,d\mu(x).
\end{align*}
If $2B_{j,i}\cap B_{\ell,m}$ is non-empty, then $U_{j,i}\subset U_{\ell,m}^*$
and $\mathcal{H}(U_{j,i})\approx \mathcal{H}(U_{\ell,m}^*)$. Therefore
by~\eqref{boundary-Ahlfors-regularity} we have
\begin{align*}
\int_{B(z,r)\cap\Om}\sum_{j,i} &\left(\fint_{U_{j,i}}\left|f-f(z)\right|\,d\hcal\right)\phi_{j,i}(x)\,d\mu(x) \\
  &\le  C\sum_{\substack{\ell,m\, \textrm{ s.t.}\\ B_{\ell,m}\cap B(z,r)\ne\emptyset}}
       \left(\fint_{U_{\ell,m}^*}\left|f-f(z)\right|\,d\hcal\right)\mu(B_{\ell,m})\\
  &\le C\sum_{\substack{\ell,m\, \textrm{ s.t.}\\ B_{\ell,m}\cap B(z,r)\ne\emptyset}} r_{\ell,m}\int_{U_{\ell,m}^*}\left|f-f(z)\right|\,d\hcal.
\end{align*}

Let $\mathcal{J}(B(z,r))$ denote the collection of all $\ell\in\ZZ$ for which
there is some $m\in\NN$ such that $B_{\ell,m}\cap B(z,r)$ is non-empty. For
each $\ell\in\mathcal{J}(B(z,r))$, set $\mathcal{I}(\ell)$ to be the collection
of all $m\in\NN$ for which $B_{\ell,m}\cap B(z,r)$ is non-empty. Then,
\begin{align*}
\int_{B(z,r)\cap\Om}\sum_{j,i} &\left(\fint_{U_{j,i}}\left|f-f(z)\right|\,d\hcal\right)\phi_{j,i}(x)\,d\mu(x) \\
&\le C\sum_{\ell\in\mathcal{J}(B(z,r))}2^{\ell} \sum_{m\in\mathcal{I}(\ell)}\int_{U_{\ell,m}^*}\left|f-f(z)\right|\,d\hcal\\
&\le C\sum_{\ell\in\mathcal{J}(B(z,r))}2^{\ell} \int_{B(z,2^7r)\cap\dOm}|f-f(z)|\, d\hcal\\
&\le C\, r\, \int_{B(z,2^7r)\cap\dOm}|f-f(z)|\, d\hcal.
\end{align*}
In the above, we used the fact that
$\sum_{\ell\in\mathcal{J}(B(z,r))}2^{\ell}\approx r$, since only the indices
$\ell\in\ZZ$ for which $2^{\ell}\approx\dist(B_{\ell,m},X\setminus\Om)\le r$
are allowed to be in $\mathcal{J}(B(z,r))$. From the fact that $z$ is a
Lebesgue point of $f$, we now have
\begin{align*}
 \fint_{B(z,r)\cap\Omega}|F-f(z)|\,d\mu&\le C \frac{r}{\mu(B(z,r)\cap\Om)}\int_{B(z,2^7r)\cap\dOm}|f-f(z)|\, d\hcal\\
   &\le C\fint_{B(z,2^7r)\cap\dOm}|f-f(z)|\, d\hcal\to 0\text{ as }r\to 0^+.
\end{align*}
This completes the proof of the lemma.
\end{proof}
\section{Comparison of $\Be^0(\dOm)$ and other function spaces}\label{sec:spaces}
We now wish to show that $\Be^0(\dOm)$ has more interesting functions than mere
constant functions. What functions are in $B_{1,1}^0(\partial\Omega)$?  Since
the results of this section deal with function spaces based on more general
doubling metric measure spaces, we consider the underlying metric measure space
$(Z,\dd,\nu)$. The other function spaces include $L^1$, BV, and the fractional
John--Nirenberg spaces as well as the class of Lipschitz functions.

Let $Z = (Z, \dd, \nu)$ be a metric space endowed with a doubling measure. In
applications in this paper, $Z$ will be $\partial \Omega \subset X$ and $\nu$
will be the Hausdorff co-dimension $1$ measure $\mathcal{H}\vert_{\dOm}$.
\subsection{Preliminary results}
\begin{lemma}\label{cor:ballcovering}
For every $\tau > 3$, there is $C = C(C_D, \tau) \ge 1$ such that for every
$r>0$ there is an at most countable set of points $\{x_j\}_j \subset Z$
(alternatively, $\{x_j\}_j \subset \Omega$, where $\Omega \subset Z$ is
arbitrary) such that
\begin{itemize}
  \setlength{\parskip}{0pt}
  \setlength{\itemsep}{2pt plus 1pt minus 2pt}
	\item $B(x_j, r) \cap B(x_k, r) = \emptyset$ whenever $j\neq k$;
  \item $Z = \bigcup_j B(x_j, \tau r)$ (alternatively, $\Omega \subset \bigcup_j B(x_j, \tau r)$);
  \item $\sum_j \chi_{B(x_j, \tau r)} \le C$.
\end{itemize}
\end{lemma}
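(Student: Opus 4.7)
The plan is to apply a standard Vitali-type maximal-separation argument. Use Zorn's lemma to extract a maximal $2r$-separated subset $\{x_j\}_j$ of $Z$ (or of $\Omega$ in the alternative formulation), meaning a collection with $\dd(x_j,x_k)\ge 2r$ for $j\ne k$ to which no further point can be adjoined without destroying the separation. Disjointness of the balls $B(x_j,r)$ is then immediate from the triangle inequality, and the covering $Z\subset\bigcup_j B(x_j,\tau r)$ follows from maximality: given any $z$, there must exist $x_j$ with $\dd(z,x_j)<2r$ (otherwise $\{x_j\}_j\cup\{z\}$ would still be $2r$-separated), and $B(x_j,2r)\subset B(x_j,\tau r)$ since $\tau>3>2$.

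The substantive step is the bounded overlap of $\{B(x_j,\tau r)\}_j$, which I would obtain from a packing estimate. Fix an arbitrary $y\in Z$ and consider the index set $J(y):=\{j:y\in B(x_j,\tau r)\}$. All the (disjoint) balls $\{B(x_j,r)\}_{j\in J(y)}$ sit inside $B(y,(\tau+1)r)$, while the inclusion $B(y,(\tau+1)r)\subset B(x_j,(2\tau+1)r)$ combined with iterated doubling yields
\[
  \nu(B(y,(\tau+1)r))\le C_D^{N}\,\nu(B(x_j,r))
\]
for a single integer $N=N(\tau)$ (e.g.\ $N=\lceil\log_2(2\tau+1)\rceil$). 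Summing the disjointness estimates $\sum_{j\in J(y)}\nu(B(x_j,r))\le\nu(B(y,(\tau+1)r))$ against this uniform lower bound on each $\nu(B(x_j,r))$ then forces $|J(y)|\le C_D^{N}$, which is the desired overlap bound with $C=C(C_D,\tau)$.

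The same packing estimate, applied centre-by-centre to a countable cover of $Z$ by balls of a fixed radius (available because the doubling measure $\nu$ is locally finite on a separable support), simultaneously delivers the asserted countability of $\{x_j\}_j$, since each fixed-radius ball contains only finitely many of the $x_j$. The main obstacle, such as it is, is purely administrative: matching the $2r$-separation parameter to $\tau$ so that a single constant $C(C_D,\tau)$ handles both the covering and the overlap claims. The margin $\tau>3$ (in fact any $\tau>2$ would do for the covering, and any $\tau>0$ for the overlap) comfortably absorbs this bookkeeping.
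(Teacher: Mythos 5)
Your argument is correct and follows essentially the same route as the paper, which simply invokes Zorn's lemma (or \cite[Lemma~4.1.12]{HKST}) to produce a maximal separated net and then appeals to doubling for the disjointness, covering, and bounded-overlap properties. You have merely filled in the details the paper leaves as a sketch (and, by working with a $2r$-separated set rather than an $r$-separated one, you make the disjointness of the balls $B(x_j,r)$ come out cleanly).
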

The above lemma is widely known to experts in the field, but we were unable to
find it in current literature; hence we provide a sketch of its proof.
\begin{proof}
An application of Zorn's lemma or~\cite[Lemma~4.1.12]{HKST} gives a countable
set $A\subset Z$ such that for distinct points $x,y\in A$ we have $\dd(x,y)\ge
r$, and for each $z\in Z$ there is some $x\in A$ such that $\dd(z,x)<r$. The
countable collection $\{B(x,r)\, :\, x\in A\}$ can be seen to satisfy the
requirements set forth in the lemma because of the doubling property of $\nu$.
\end{proof}
\begin{lemma}
\label{lem:BesovIndepR}
Let $f\in L^1(Z)$. Then,
\begin{align*}
  &\int_0^r \int_Z \fint_{B(x, t)} |f(y) - f(x)|\,d\nu(y) \,d\nu(x) \frac{dt}{t^{1+\theta}}
     < \infty \quad\mbox{if and only if} \\
  &\int_0^R \int_Z \fint_{B(x, t)} |f(y) - f(x)|\,d\nu(y) \,d\nu(x) \frac{dt}{t^{1+\theta}} < \infty\,,
\end{align*}
where $0<r<R<\infty$. If $\theta > 0$, then the equivalence holds true even for
$R=\infty$.
\end{lemma}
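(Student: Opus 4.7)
The plan is to reduce the lemma to showing that the ``tail'' integral
\[
\int_r^R \int_Z \fint_{B(x,t)} |f(y)-f(x)|\,d\nu(y)\,d\nu(x)\,\frac{dt}{t^{1+\theta}}
\]
is finite whenever $f\in L^1(Z)$, since the full integrals from $0$ to $R$ and from $0$ to $r$ differ exactly by this quantity. Thus the key step is an $L^1$-bound for the inner double integral that is uniform in $t$.

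The first step is to split via the triangle inequality:
\[
\int_Z \fint_{B(x,t)}|f(y)-f(x)|\,d\nu(y)\,d\nu(x) \le \int_Z \fint_{B(x,t)}|f(y)|\,d\nu(y)\,d\nu(x) + \|f\|_{L^1(Z)}.
\]
The second term is already bounded. For the first, I would apply Fubini's theorem to rewrite it as
\[
\int_Z |f(y)| \left(\int_{B(y,t)} \frac{d\nu(x)}{\nu(B(x,t))}\right) d\nu(y),
\]
and then use the doubling property of $\nu$: for $x\in B(y,t)$ we have $B(y,t)\subset B(x,2t)$, so $\nu(B(y,t))\le C_D\,\nu(B(x,t))$. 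Hence the bracketed integrand is bounded by $C_D\,\chi_{B(y,t)}(x)/\nu(B(y,t))$, which integrates to $C_D$. This gives the uniform estimate
\[
\int_Z \fint_{B(x,t)}|f(y)-f(x)|\,d\nu(y)\,d\nu(x) \le (C_D+1)\,\|f\|_{L^1(Z)}.
\]

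The last step is to integrate in $t$. For any $0<r<R<\infty$,
\[
\int_r^R \int_Z \fint_{B(x,t)}|f(y)-f(x)|\,d\nu(y)\,d\nu(x)\,\frac{dt}{t^{1+\theta}}
\le (C_D+1)\,\|f\|_{L^1(Z)}\int_r^R \frac{dt}{t^{1+\theta}},
\]
which is finite (the integrability at the upper limit holds for every $\theta\in[0,1]$ as long as $R<\infty$). Adding or subtracting this tail proves the equivalence of the two conditions. When $R=\infty$, the same bound remains valid, and $\int_r^\infty t^{-1-\theta}\,dt$ is finite precisely when $\theta>0$; this yields the final assertion.

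I do not anticipate any genuine obstacle: the only subtle point is the Fubini swap together with the doubling comparison $\nu(B(x,t))\approx \nu(B(y,t))$ when $d(x,y)<t$, which is standard. The bulk of the argument is bookkeeping with the factor $t^{-1-\theta}$ to identify exactly where $\theta>0$ becomes essential (namely, only for the $R=\infty$ statement).
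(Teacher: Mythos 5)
Your proposal is correct and follows essentially the same route as the paper: a triangle-inequality split, a Fubini--doubling argument showing the inner double integral is uniformly bounded by a constant times $\|f\|_{L^1(Z)}$, and then integration of the tail in $t$, where the distinction between $R<\infty$ (any $\theta\in[0,1]$) and $R=\infty$ (requiring $\theta>0$) appears exactly as you describe.
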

\begin{proof}
By the triangle inequality, we obtain for $t>0$ that
\begin{align*}
    \int_Z \fint_{B(x, t)} |f(y) - f(x)|\,d\nu(y) \,d\nu(x)
  &\le \int_Z \fint_{B(x, t)} \bigl(|f(y)| + |f(x)|\bigr)\,d\nu(y)\, d\nu(x) \\
  & = \int_Z \biggl(|f(x)| + \fint_{B(x, t)} |f(y)| \,d\nu(y)\biggr)d\nu(x)\\
  & = \|f\|_{L^1(Z)} +  \int_Z \fint_{B(x, t)} |f(y)| \,d\nu(y)\,d\nu(x).
\end{align*}
The Fubini theorem and the doubling condition then yield
\begin{align*}
  \int_Z \fint_{B(x, t)} |f(y)| \,d\nu(y)\,d\nu(x)
& \approx \int_{Z\times Z} \frac{|f(y)| \chi_{(0,t)}(\dd(x,y))}{\nu(B(y,t))}\, d(\nu\times\nu)(x,y) \\
  & = \int_Z |f(y)| \fint_{B(y, t)} \,d\nu(x)\,d\nu(y) = \|f\|_{L^1(Z)}\,.
\end{align*}
For $r>0$ set
\[
 I(r):=\int_0^r \int_Z \fint_{B(x, t)} |f(y) - f(x)|\,d\nu(y) \,d\nu(x) \frac{dt}{t^{1+\theta}}.
\]
Then,
\begin{align*}
I(R)&=I(r)+
  \int_r^R \int_Z \fint_{B(x, t)} |f(y) - f(x)|\,d\nu(y) \,d\nu(x) \frac{dt}{t^{1+\theta}}\\
  &\le I(r)+C\int_r^R \|f\|_{L^1(Z)} \frac{dt}{t^{1+\theta}}
  = I(r)+C \|f\|_{L^1(Z)} \Bigl(\frac{1}{r^{\theta}} - \frac{1}{R^\theta}\Bigr),
\end{align*}
with obvious modification for $\theta=0$.
\end{proof}
\begin{lemma}
\label{lem:BesovEquiv}
Let $R \le 2 \diam(Z)$ and $\theta \in [0, 1]$. Then,
\[
  I(R) \approx \int_Z \int_{B(x,R)} \frac{|f(y) - f(x)|}{ \nu(B(x, \dd(x,y))) \dd(x,y)^\theta}\,d\nu(y)\,d\nu(x) .
\]
\end{lemma}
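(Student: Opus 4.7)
\textbf{Proof plan for Lemma~\ref{lem:BesovEquiv}.}

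The guiding idea is Fubini's theorem: once one unrolls the averaged integrand $\fint_{B(x,t)}$ and recognizes that the condition $y\in B(x,t)$ is the same as $t>\dd(x,y)$, the $t$-integration detaches from $x$ and $y$ and reduces to a one-dimensional integral $\int_{\dd(x,y)}^R dt/(\nu(B(x,t))t^{1+\theta})$, which I will show is comparable to $1/(\nu(B(x,\dd(x,y)))\dd(x,y)^\theta)$ by the doubling property. So the proof splits cleanly into a Fubini step and a one-variable calculus step.

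First I would rewrite $\fint_{B(x,t)} |f(y)-f(x)|\,d\nu(y) = \nu(B(x,t))^{-1}\int_Z |f(y)-f(x)|\chi_{\{\dd(x,y)<t\}}\,d\nu(y)$ and apply Fubini (the integrand is non-negative, so the theorem applies without integrability hypotheses) to obtain
\[
  I(R) = \int_Z\int_{B(x,R)} |f(y)-f(x)|\left(\int_{\dd(x,y)}^{R} \frac{dt}{\nu(B(x,t))\,t^{1+\theta}}\right)d\nu(y)\,d\nu(x).
\]
So it suffices to prove the pointwise equivalence
\[
  \int_{s}^{R} \frac{dt}{\nu(B(x,t))\,t^{1+\theta}} \approx \frac{1}{\nu(B(x,s))\,s^{\theta}}
  \qquad \text{for } 0<s<R\le 2\diam(Z),
\]
with constants depending only on $C_D$, $\theta$, and (for $\theta=0$) the ratio $R/s$ through the doubling exponent.

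For the lower bound I would restrict the $t$-integral to $[s,\min(2s,R)]$. On this shell $\nu(B(x,t))\le \nu(B(x,2s))\le C_D\,\nu(B(x,s))$ by doubling, and $\int_{s}^{\min(2s,R)} dt/t^{1+\theta} \ge c(\theta)/s^\theta$ (a direct elementary computation, separating the cases $R\ge 2s$ and $R<2s$; in the latter case $s\approx R$ and both sides of the desired equivalence are comparable to $1/(\nu(B(x,s))R^\theta)$). For the upper bound I would decompose $[s,R]$ dyadically as $\bigcup_k[2^k s,2^{k+1}s]$; on each piece $\nu(B(x,t))\ge \nu(B(x,2^k s))$, and by iterating doubling $\nu(B(x,2^k s))\ge C_D^{-0}\nu(B(x,s))$, so
\[
  \int_{s}^{R} \frac{dt}{\nu(B(x,t))\,t^{1+\theta}} \le \sum_{k\ge 0} \frac{C}{\nu(B(x,s))\,(2^k s)^\theta}
  = \frac{C}{\nu(B(x,s))\,s^\theta}\sum_{k\ge 0} 2^{-k\theta}.
\]

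The main obstacle is the case $\theta=0$, where the geometric series $\sum 2^{-k\theta}$ degenerates. I would handle it by using the truncation $R\le 2\diam(Z)$: the number of dyadic levels is bounded by $\log_2(R/s)$, and this logarithmic factor is absorbed using the quantitative doubling estimate $\nu(B(x,t))\le C(t/s)^Q\nu(B(x,s))$ reversed into the lower bound $1/\nu(B(x,t))\le (s/t)^{-Q}/\nu(B(x,s))$. More precisely, for $\theta=0$ the sum $\sum_{k=0}^{\lfloor\log_2(R/s)\rfloor} 1/\nu(B(x,2^k s))$ is controlled by pairing levels whose measures differ by definite factors, again via doubling, yielding the clean bound $C/\nu(B(x,s))$ after absorbing constants that depend only on $C_D$ and $\diam(Z)$. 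Combining the matching two-sided bounds and plugging back into the Fubini identity yields the asserted equivalence.
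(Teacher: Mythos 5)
You take exactly the route the paper intends (its entire proof is ``follows from the Fubini theorem, see also \cite{GKS}''), and your Fubini step is correct: $I(R)=\int_Z\int_{B(x,R)}|f(y)-f(x)|K_R(x,y)\,d\nu(y)\,d\nu(x)$ with $K_R(x,y)=\int_{\dd(x,y)}^R \nu(B(x,t))^{-1}t^{-1-\theta}\,dt$. The gaps are in the two-sided pointwise comparison of $K_R(x,y)$ with $\nu(B(x,s))^{-1}s^{-\theta}$, $s=\dd(x,y)$. For the lower bound, your treatment of the case $R<2s$ is wrong: $\int_s^R t^{-1-\theta}\,dt\to 0$ as $s\uparrow R$, so $K_R(x,y)$ is \emph{not} comparable to $1/(\nu(B(x,s))R^\theta)$ there --- the left-hand side degenerates while the right-hand side does not. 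Since the hypothesis is $R\le 2\diam(Z)$ (not $R\ge 2\diam(Z)$), pairs with $\dd(x,y)\in(R/2,R)$ genuinely occur, and the contribution of that annulus to the double integral has to be absorbed by a separate, integrated argument (or by first passing to a larger $R$ via Lemma~\ref{lem:BesovIndepR}); a pointwise bound cannot do it.

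The more serious problem is the upper bound when $\theta=0$, which is precisely the case the paper needs for $B^0_{1,1}$. Your dyadic decomposition reduces matters to $\sum_{k=0}^{K}1/\nu(B(x,2^k s))\le C/\nu(B(x,s))$ with $K\approx\log_2(R/s)$, and you justify this by ``pairing levels whose measures differ by definite factors, again via doubling.'' But doubling gives only the \emph{upper} growth bound $\nu(B(x,2t))\le C_D\,\nu(B(x,t))$; what you need is a definite \emph{lower} growth bound $\nu(B(x,2^k s))\ge(1+c)^k\nu(B(x,s))$, i.e.\ reverse doubling, which does not follow from doubling alone (it requires, e.g., uniform perfectness of $Z$). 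Concretely, take $Z=\{0\}\cup\{s\}\cup[Ms,2Ms]$ with $\nu=Ms\,\delta_0+Ms\,\delta_s+\mathcal{L}^1|_{[Ms,2Ms]}$: this is doubling with a constant independent of $M$, yet $\int_s^{R}\frac{dt}{\nu(B(0,t))\,t}\ge\frac{\log M}{2\,\nu(B(0,s))}$, so the claimed pointwise bound fails by an unbounded factor (and testing with $f=\chi_{\{0\}}$ shows the integrated inequality $I(R)\le CJ$ fails too). So for $\theta=0$ either an additional hypothesis on $(Z,\nu)$ (reverse doubling, as one would hope to extract from the codimension-one regularity of $\dOm$) or a genuinely different argument is needed; your proof as written assumes the key estimate rather than proving it.
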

\begin{proof}
The equivalence follows from the Fubini theorem, see
also~\cite[Theorem~5.2]{GKS}.
\end{proof}
\begin{lemma}
\label{lem:BesovEquivFixedBalls}
There is a constant $C\ge 1$ and there are collections of balls $\bcal^k$,
$k=0,1,\ldots$, such that
\[
  C^{-1}I(1)
 \le\sum_{k=0}^\infty \sum_{B\in\bcal^k} \frac{1}{\rad(B)^\theta}\int_B |f-f_B|\,d\nu
   \le C I(4).
\]
Moreover, $\rad(B) \approx 2^{-k}$ whenever $B\in\bcal^k$, and the balls within
each collection $\bcal^k$ have bounded overlap (also after inflation by a given
factor $\tau\ge1$).
\end{lemma}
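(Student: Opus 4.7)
The plan is to construct $\bcal^k$ by a standard maximal-net argument. For each $k\ge 0$, select a maximal $2^{-k}$-separated set of centers $\{z_j^k\}_j\subset Z$ (so every point of $Z$ lies within $2^{-k}$ of some $z_j^k$), and set $\bcal^k:=\{B(z_j^k,\alpha\,2^{-k})\}_j$ for a fixed dilation $\alpha$ chosen large enough to make the two estimates below work cleanly. By Lemma~\ref{cor:ballcovering}, the resulting balls cover $Z$ with bounded overlap, which persists after inflation by any prescribed $\tau\ge 1$, and clearly $\rad(B)=\alpha\,2^{-k}\approx 2^{-k}$.

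For the upper bound $\sum \le C\,I(4)$, I would begin from Jensen's inequality
\[
\int_B |f-f_B|\,d\nu \le \int_B \fint_B |f(x)-f(y)|\,d\nu(y)\,d\nu(x).
\]
For $B\in\bcal^k$ with radius $r_k$ and $x\in B$, one has $B\subset B(x,2r_k)\subset B(x,t)$ whenever $t\ge 2r_k$, and doubling gives $\nu(B(x,t))\le C\nu(B)$ for $t\le 4r_k$, so
\[
\fint_B |f(x)-f(y)|\,d\nu(y) \le C\fint_{B(x,t)} |f(x)-f(y)|\,d\nu(y)
\]
for every $t\in[2r_k,4r_k]$. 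Integrating in $t$ against $dt/t^{1+\theta}$ over this interval supplies the factor $\approx \rad(B)^{-\theta}$; summing over $B\in\bcal^k$ (bounded overlap replaces $\sum_B\int_B$ by $C\int_Z$) and then over $k\ge 0$ yields a constant multiple of $I(R)$ for some fixed $R$ controlled by $\alpha$. Choosing $\alpha$ appropriately (and, if needed, invoking Lemma~\ref{lem:BesovIndepR}) gives the claimed constant $R=4$.

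For the lower bound $C^{-1}I(1)\le\sum$, I would decompose $I(1)=\sum_{k\ge 0}\int_{(2^{-k-1},2^{-k}]}\cdots\,dt/t^{1+\theta}$. For fixed $k$ and $t$ in the $k$-th dyadic window, partition $Z=\bigsqcup_j E_j^k$ with $E_j^k\subset B(z_j^k,2^{-k})$ (possible by the maximal-net property). For $x\in E_j^k$ one has $B(x,t)\subset B(z_j^k,2\cdot 2^{-k})\subset B_{j,k}$ once $\alpha$ is large enough, so both $x$ and each $y\in B(x,t)$ lie in $B_{j,k}$. The triangle inequality
\[
\fint_{B(x,t)} |f(x)-f(y)|\,d\nu(y) \le |f(x)-f_{B_{j,k}}| + \fint_{B(x,t)} |f(y)-f_{B_{j,k}}|\,d\nu(y)
\]
splits the oscillation into two parts. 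The first integrates over $E_j^k\subset B_{j,k}$ to give $\le \int_{B_{j,k}}|f-f_{B_{j,k}}|\,d\nu$. The second calls for a Fubini swap; the doubling inequality $\nu(B(x,t))\approx\nu(B(y,t))$, valid because $\dd(x,y)<t$, lets one transfer the normalization from $x$ to $y$, and one again obtains $\le C\int_{B_{j,k}}|f-f_{B_{j,k}}|\,d\nu$. Summing over $j$ and integrating in $t$ (the $k$-th slice produces a factor $\approx 2^{k\theta}\approx\rad(B_{j,k})^{-\theta}$) yields the desired lower bound.

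The delicate step is the Fubini interchange in the lower bound: after swapping the order of integration one is left with the $x$-dependent normalization $\nu(B(x,t))^{-1}$, and the doubling property of $\nu$ on $Z$ is what allows one to replace it by $\nu(B(y,t))^{-1}$, after which the $x$-integration over $E_j^k\cap B(y,t)$ yields a harmless constant. A secondary bookkeeping concern is matching the $t$-integration range in the upper bound with the prescribed constant $R=4$; this is handled by calibrating the dilation $\alpha$ in the definition of $\bcal^k$ together with Lemma~\ref{lem:BesovIndepR}.
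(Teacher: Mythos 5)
Your proposal is correct and follows essentially the same route as the paper: a maximal-net covering at each dyadic scale (the paper's Lemma~\ref{cor:ballcovering}), doubling to pass between averages over $B(x,t)$ and averages over the fixed net balls, and the elementary equivalence $\int_B|f-f_B|\,d\nu\approx\int_B\fint_B|f(x)-f(y)|\,d\nu(y)\,d\nu(x)$, with the remaining constant-calibration absorbed by Lemma~\ref{lem:BesovIndepR}. The only cosmetic difference is in the lower bound, where the paper simply enlarges the inner ball $B(x,2^{-k})$ to $2B$ using doubling to control the change of normalization, which renders the triangle-inequality-plus-Fubini step you flag as delicate unnecessary.
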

\begin{proof}
By Lemma~\ref{cor:ballcovering}, there is a constant $C = C(C_D, \tau)\ge1$ and
collections of balls $\widetilde{\bcal}^k$, $k=0, 1,\ldots$, such that $\rad
(B) = 2^{-k}$ for every $B \in \widetilde{\bcal}^k$ and $1 \le \sum_{B \in
\widetilde{\bcal}^k} \chi_{2\tau B}(x) \le C$ for all $x\in Z$. Then,
by~\eqref{eq:BesovEquivSum},
\begin{align*}
  I(1)
& \le \sum_{k=0}^\infty 2^{k\theta} \sum_{B\in\widetilde{\bcal}^k} \int_B \fint_{B(x,2^{-k})} |f(y) - f(x)| \,d\nu(y)\,d\nu(x) \\
  & \le C \sum_{k=0}^\infty 2^{k\theta} \sum_{B\in\widetilde{\bcal}^k} \int_B \fint_{2B} |f(y) - f(x)| \,d\nu(y)\,d\nu(x) \\
  & \le C \sum_{k=0}^\infty \sum_{B\in\widetilde{\bcal}^k} \frac{1}{(2^{-k})^\theta}\int_{2B} \fint_{2B} |f(y) - f(x)| \,d\nu(y)\,d\nu(x)\\
  & \le C \sum_{k=0}^\infty \sum_{B\in\widetilde{\bcal}^k} \frac{1}{\rad(B)^\theta} \int_{2B} |f-f_{2B}|\,d\nu.
\end{align*}
Thus, we choose $\bcal^k = \{2B: B\in \widetilde{\bcal}^k\}$, $k=0, 1,\ldots$,
to conclude the proof of the first inequality.

The proof of the second inequality follows analogous steps backwards. Recall
that $\rad(B) = 2^{1-k}$ whenever $B\in\bcal^k$. Thus,
\begin{align*}
\sum_{k=0}^\infty \sum_{B\in\bcal^k}& \frac{1}{\rad(B)^\theta} \int_{B} |f-f_{B}|\,d\nu \\
& \approx \sum_{k=0}^\infty \sum_{B\in\bcal^k} 2^{k\theta} \int_B \fint_B |f(y) - f(x)|\,d\nu(y)\,d\nu(x) \\
& \le C \sum_{k=0}^\infty 2^{k\theta} \sum_{B\in\bcal^k} \int_B \fint_{B(x,2^{2-k})} |f(y) - f(x)|\,d\nu(y)\,d\nu(x) \\
\displaybreak[1] & \le C \sum_{k=0}^\infty 2^{k\theta} \int_Z \fint_{B(x,2^{2-k})} |f(y) - f(x)|\,d\nu(y)\,d\nu(x),
\end{align*}
where we used the fact that the balls have uniformly bounded overlap within
each collection $\bcal^k$.
\end{proof}
\begin{remark}
Let $0 \le \theta < \eta \le 1$. Then, $\|u\|_{B_{1,1}^\theta} \le C
(1+R^{\eta-\theta}) \|u\|_{B_{1,1}^\eta}$ and $ \|u\|_{A^{\theta}_{1,\tau}} \le
C (1+R^{\eta-\theta}) \|u\|_{A^{\eta}_{1,\tau}}$ for any $\tau \ge 1$.
\end{remark}
\subsection{Comparison of function spaces with $\Be^\theta(Z)$}
\begin{proposition}
Let $\theta \in [0,1]$ and $\tau \ge 1$ be arbitrary. Then there is a constant
$C\ge 1$, which depends on $\theta$ and $\tau$, such that
\[
\|u\|_{A^{\theta}_{1,\tau}} \le C \|u\|_{B_{1,1}^\theta}.
\]
\end{proposition}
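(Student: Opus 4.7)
The plan is to match the $L^1$ parts of the two norms trivially and then bound the weighted sum of local oscillations appearing in $\|u\|_{A^\theta_{1,\tau}}$ by a single integral that is, up to a change of variables, the Besov seminorm. The core computation is a pointwise replacement of $u_{\tau B}$ by $u(x)$ followed by the observation that $\tfrac{1}{\rad(B)^\theta}$ is comparable to $\int_{\rad(B)}^{2\rad(B)} t^{-1-\theta}\,dt$.

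Concretely, fix an admissible collection $\bcal_\tau$ and a ball $B\in\bcal_\tau$ with $r:=\rad(B)$. Writing $B^*=\tau B$ and using Jensen's inequality,
\[
\int_{B^*} |u-u_{B^*}|\,d\nu \le \int_{B^*}\fint_{B^*} |u(y)-u(x)|\,d\nu(y)\,d\nu(x).
\]
For each $x\in B^*$ one has $B^*\subset B(x,2\tau r)$ and, by doubling, $\nu(B^*)\approx \nu(B(x,2\tau r))$, so the inner average can be replaced by $\fint_{B(x,2\tau r)}|u(y)-u(x)|\,d\nu(y)$ at the cost of a constant $C(C_D,\tau)$. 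Monotonicity in the radius (again via doubling) then allows us to replace $2\tau r$ by any $2\tau t$ with $t\in[r,2r]$, and integrating $\frac{dt}{t^{1+\theta}}$ over that interval yields a factor comparable to $r^{-\theta}$. Therefore
\[
\frac{1}{r^\theta}\int_{B^*}|u-u_{B^*}|\,d\nu
\le C\int_{r}^{2r}\int_{B^*}\fint_{B(x,2\tau t)}|u(y)-u(x)|\,d\nu(y)\,d\nu(x)\,\frac{dt}{t^{1+\theta}}.
\]

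Now sum over $B\in\bcal_\tau$ and rewrite as a single integral over $t$ via Fubini. For each fixed $t$, only balls $B$ with $r\in[t/2,t]$ contribute, and for such $B$ the sets $B^*=\tau B$ are pairwise disjoint by admissibility of $\bcal_\tau$; consequently
\[
\sum_{\substack{B\in\bcal_\tau\\ r\in[t/2,t]}}\int_{B^*}\fint_{B(x,2\tau t)}|u(y)-u(x)|\,d\nu(y)\,d\nu(x)
\le \int_Z\fint_{B(x,2\tau t)}|u(y)-u(x)|\,d\nu(y)\,d\nu(x).
\]
After the substitution $s=2\tau t$ (which changes the weight by a factor depending only on $\tau$ and $\theta$), the right-hand integrates to an expression of the form $\int_0^{4R}\int_Z\fint_{B(x,s)}|u(y)-u(x)|\,d\nu(y)\,d\nu(x)\,\frac{ds}{s^{1+\theta}}$, which, since $\rad(B)\le R/\tau$ bounds the range of $t$, is controlled by $\|u\|_{B^\theta_{1,1}(Z)}$ via Lemma~\ref{lem:BesovIndepR}.

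The only nontrivial point is the disjointness-plus-Fubini swap in the third paragraph: one must check that at each scale $t$ the surviving family $\{\tau B : \rad(B)\in[t/2,t]\}$ is exactly the part of $\bcal_\tau$ whose pairwise disjointness is guaranteed by the hypothesis on $\bcal_\tau$, so that the sum passes to an integral over $Z$. This is straightforward but is the only place where the structural assumption on $\bcal_\tau$ is actually used; everywhere else the argument is purely local in each ball and relies only on doubling.
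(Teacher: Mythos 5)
Your proof is correct and follows essentially the same strategy as the paper's: Jensen's inequality to pass to the double oscillation over $\tau B$, doubling to compare $\nu(\tau B)$ with balls centered at points of $\tau B$, and the pairwise disjointness of the inflated balls to assemble the sum into a global integral controlled by the Besov seminorm at a comparable scale (finishing via Lemma~\ref{lem:BesovIndepR}). The only cosmetic difference is that the paper concludes by passing to the kernel form $\int_Z\int_{B(x,2R)}|u(x)-u(y)|\,\nu(B(x,\dd(x,y)))^{-1}\dd(x,y)^{-\theta}\,d\nu(y)\,d\nu(x)$ of Lemma~\ref{lem:BesovEquiv}, whereas you reconstruct the $dt/t^{1+\theta}$ integral directly by averaging over the dyadic range $t\in[\rad(B),2\rad(B)]$; the two devices are equivalent applications of Fubini and doubling.
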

\begin{proof}
Let $\bcal_\tau$ be a fixed collection of non-overlapping balls in $Z$ of
radius at most $R/\tau$. Then,
\begin{align*}
  \sum_{B\in\bcal_\tau} \frac{1}{\rad(B)^\theta} &\int_{\tau B}|u-u_{\tau B}|\,d\nu\\
& \approx \sum_{B\in\bcal_\tau} \frac{1}{(\tau\rad(B))^\theta} \int_{\tau B}\fint_{\tau B}|u(x)-u(y)|\,d\nu(y)\,d\nu(x) \\
  & = \int_Z \sum_{B\in\bcal_\tau} \frac{\chi_{\tau B}(x)}{(\tau\rad(B))^\theta}\fint_{\tau B}|u(x)-u(y)|\,d\nu(y)\,d\nu(x) \\
  & \le \int_Z \sum_{B\in\bcal_\tau} \chi_{\tau B}(x)\int_{\tau B}\frac{|u(x)-u(y)|}{\nu(B(x, \dd(x,y)))\dd(x,y)^\theta}\,d\nu(y)\,d\nu(x) \\
  & \le \int_Z \int_{B(x,2R)}\frac{|u(x)-u(y)|}{\nu(B(x, \dd(x,y)))\dd(x,y)^\theta}\,d\nu(y)\,d\nu(x) \approx I(2R),
\end{align*}
where we used that $\nu$ is doubling and $B(x, \dd(x,y)) \subset 3\tau B$ for
all $x,y\in\tau B$. Taking supremum over all collections of balls concludes the
proof.
\end{proof}
\begin{proposition}
Assume that $Z$ is bounded. Let $0 \le \theta < \eta \le 1$ and $\tau \ge 1$.
Then, $A_{1,\tau}^\eta(Z) \subset B_{1,1}^\theta(Z)$.
\end{proposition}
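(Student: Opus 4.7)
The plan is to bound the Besov seminorm of $u$ by a constant multiple of $\|u\|_{A^\eta_{1,\tau}(Z)}$, since the $L^1(Z)$ contributions to the two norms agree. The natural entry point is Lemma~\ref{lem:BesovEquivFixedBalls} applied with the parameter $\theta$, which, combined with the boundedness of $Z$ through Lemma~\ref{lem:BesovIndepR}, reduces the task to estimating
\[
\sum_{k=0}^\infty\sum_{B\in\bcal^k}\frac{1}{\rad(B)^\theta}\int_B|u-u_B|\,d\nu,
\]
where $\rad(B)\approx 2^{-k}$ on level $\bcal^k$ and the families $\{\tau B:B\in\bcal^k\}$ have uniformly bounded overlap.

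The heart of the argument is a coloring at each level. Because the underlying balls $\widetilde{\bcal}^k$ delivered by Lemma~\ref{cor:ballcovering} are pairwise disjoint, their centers are at least $2\cdot 2^{-k}$-separated; meanwhile, any two balls in $\bcal^k$ whose $\tau$-inflates meet have centers within distance $C\tau\cdot 2^{-k}$ of each other, so the doubling of $\nu$ bounds by some $N=N(C_D,\tau)$ the number of neighbors of any fixed $B\in\bcal^k$ in the graph with edges $B\sim B'$ iff $\tau B\cap\tau B'\ne\emptyset$. A standard greedy coloring then partitions $\bcal^k=\bigsqcup_{j=1}^{N+1}\bcal^{k,j}$ so that the $\tau$-inflates within each $\bcal^{k,j}$ are pairwise disjoint. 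Since $Z$ is bounded, all radii occurring in the $\bcal^k$ are uniformly bounded, so choosing the parameter $R$ in the definition of $\|\cdot\|_{A^\eta_{1,\tau}(Z)}$ large enough (e.g.\ any $R\ge 2\tau$) makes every $\bcal^{k,j}$ an admissible collection in that definition.

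On each admissible sub-collection, the elementary inequality $\int_B|u-u_B|\,d\nu\le 2\int_{\tau B}|u-u_{\tau B}|\,d\nu$ combined with the rewriting $\rad(B)^{-\theta}=\rad(B)^{\eta-\theta}\rad(B)^{-\eta}\approx 2^{-k(\eta-\theta)}\rad(B)^{-\eta}$ yields
\[
\sum_{B\in\bcal^{k,j}}\frac{1}{\rad(B)^\theta}\int_B|u-u_B|\,d\nu\le C\,2^{-k(\eta-\theta)}\|u\|_{A^\eta_{1,\tau}(Z)}.
\]
Summing over the $N+1$ colors and then over $k\ge 0$ leaves a geometric series $\sum_k 2^{-k(\eta-\theta)}$, which converges precisely because $\eta>\theta$. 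Tracing back through Lemmas~\ref{lem:BesovEquivFixedBalls} and~\ref{lem:BesovIndepR} gives $\|u\|_{B_{1,1}^\theta(Z)}\le C\|u\|_{A^\eta_{1,\tau}(Z)}$.

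The only genuinely nontrivial step is the coloring/partition of $\bcal^k$; the rest is routine bookkeeping. The strict inequality $\theta<\eta$ is used exactly once, to sum the geometric series, and the boundedness of $Z$ is used both to pass to a finite $R$ via Lemma~\ref{lem:BesovIndepR} and to ensure that a single choice of $R$ in the $A^\eta_{1,\tau}$-norm accommodates every $\bcal^{k,j}$ at once.
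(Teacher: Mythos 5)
Your proof is correct and follows essentially the same route as the paper: both reduce to the discrete characterization of Lemma~\ref{lem:BesovEquivFixedBalls}, rewrite $\rad(B)^{-\theta}\approx 2^{-k(\eta-\theta)}\rad(B)^{-\eta}$, and sum the resulting geometric series using $\theta<\eta$. The only difference is that you make explicit the coloring of each $\bcal^k$ into finitely many subcollections with pairwise disjoint $\tau$-inflates (needed to invoke the $A^\eta_{1,\tau}$-norm, whose admissible collections must be $\tau$-disjoint rather than merely of bounded overlap), a step the paper leaves implicit.
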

\begin{proof}
We use the characterization of Besov functions from
Lemma~\ref{lem:BesovEquivFixedBalls}.
\begin{align*}
   \sum_{k=0}^\infty &\sum_{B\in\bcal^k} \frac{1}{\rad(B)^\theta}\int_B |f-f_B|\,d\nu \\
  & \le C \sum_{k=0}^\infty
  \sum_{B\in\bcal^k} \frac{2^{k(\theta-\eta)}}{\rad(B)^\eta}\int_B |f-f_B|\,d\nu
  \le C \sum_{k=0}^\infty 2^{k(\theta-\eta)} \|f\|_{A_{1,\tau}^\eta(Z)}. \qedhere
\end{align*}
\end{proof}
\begin{lemma}
For every $\tau \ge 1$, we have $L^1(Z) = A_{1,\tau}^0(Z)$.
\end{lemma}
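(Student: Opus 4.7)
The plan is to prove a two-sided estimate between $\|u\|_{L^1(Z)}$ and $\|u\|_{A^0_{1,\tau}(Z)}$. One direction is trivial from the definition in \eqref{eq:JN}: since the seminorm part is non-negative, $\|u\|_{L^1(Z)} \le \|u\|_{A^0_{1,\tau}(Z)}$, which also gives the inclusion $A^0_{1,\tau}(Z) \subset L^1(Z)$.

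For the reverse direction, I would fix $u \in L^1(Z)$ and an arbitrary admissible collection $\bcal_\tau$ of balls whose $\tau$-inflations are pairwise disjoint. The crucial observation is that for $\theta = 0$ the weight $\rad(B)^{-\theta}$ disappears, so we can absorb everything through the pointwise triangle inequality $|u - u_{\tau B}| \le |u| + |u_{\tau B}|$. Integrating over $\tau B$ yields
\[
  \int_{\tau B} |u - u_{\tau B}|\,d\nu \le \int_{\tau B} |u|\,d\nu + \nu(\tau B)\,|u_{\tau B}| \le 2\int_{\tau B} |u|\,d\nu,
\]
where the last step uses the trivial estimate $\nu(\tau B) |u_{\tau B}| \le \int_{\tau B}|u|\,d\nu$.

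Summing over $B \in \bcal_\tau$ and invoking the disjointness $\tau B_1 \cap \tau B_2 = \emptyset$ for distinct balls in $\bcal_\tau$, we obtain
\[
  \sum_{B\in\bcal_\tau} \int_{\tau B} |u - u_{\tau B}|\,d\nu \le 2 \sum_{B\in\bcal_\tau} \int_{\tau B}|u|\,d\nu \le 2\|u\|_{L^1(Z)}.
\]
Since the bound is independent of the collection $\bcal_\tau$, taking the supremum gives $\|u\|_{A^0_{1,\tau}(Z)} \le 3\|u\|_{L^1(Z)}$, which establishes the reverse inclusion and completes the equivalence.

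There is no real obstacle here; the argument is essentially a bookkeeping exercise that exploits the fact that the $\theta = 0$ case dispenses with the radius weight, so disjointness of the inflated balls alone is enough to control the sum by $\|u\|_{L^1(Z)}$.
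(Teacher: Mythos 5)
Your argument is correct and is essentially identical to the paper's own proof: both directions follow from the definition and from the pointwise triangle inequality $|u-u_{\tau B}|\le |u|+|u_{\tau B}|$ combined with the disjointness of the inflated balls, yielding the same constant $3$ in the estimate $\|u\|_{A_{1,\tau}^0(Z)}\le 3\|u\|_{L^1(Z)}$.
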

\begin{proof}
Let $\bcal_\tau$ be a fixed collection of balls in $Z$ that remain pairwise
disjoint after being inflated $\tau$-times. Then, the triangle inequality
yields that
\begin{align*}
  \sum_{B\in\bcal_\tau} \frac{1}{\rad(B)^0} \int_{\tau B}|u-u_{\tau B}|\,d\nu
  &\approx \sum_{B\in\bcal_\tau} \biggl(\int_{\tau B}|u| + \biggl|\fint_{\tau B} u\,d\nu\biggr|\biggr)\,d\nu\\
  &= \sum_{B\in\bcal_\tau} 2 \int_{\tau B}|u| \,d\nu \le 2  \int_{Z} |u| \,d\nu\,.
\end{align*}
Hence, $\|u\|_{A_{1,\tau}^0(Z)} \le 3 \|u\|_{L^1(Z)}$.

Conversely, $\|u\|_{A_{1,\tau}^0(Z)} \ge \|u\|_{L^1(Z)}$ by the definition
\eqref{eq:JN}.
\end{proof}
The following example shows that the inclusion $B_{1,1}^0(Z) \subset
A_{1,\tau}^0(Z)$ may, in general, be strict.
\begin{example}
\label{exa:Bes0!=L1}
Let
\[
  f(x) = \sum_{j=1}^\infty \chi_{[1/(j+1), 1/j)}(x) u(4^j x), \quad x\in(0,1),
\]
where $u$ is the $1$-periodic extension of $\chi_{[0,1/2)}$. Obviously, $f\in
L^\infty(0,1)$. Hence, $f \in L^1(0,1) = A_{1,\tau}^0(0,1)$. On the other hand,
$u \notin B_{1,1}^0(0,1)$, which we are about to show.

We will use the characterization of $B_{1,1}^0(0,1)$ from
Lemma~\ref{lem:BesovEquivFixedBalls}. There, we may choose $\bcal^k = \{ (l
2^{-k}, (l+2) 2^{-k}): l=0,1,\ldots, 2^k-2\}$ to get $\fint_B |f-f_B| \approx
1$ whenever $B \subset (0, 1/j)$ and $B\in\bcal^k$ for some $k\le j$. Then,
\[
  \sum_{k=0}^\infty \sum_{B\in\bcal^k} \int_{B} |f-f_B|
\ge C^{-1} \sum_{k=0}^\infty \sum_{\substack{B\in\bcal^k\\ B \subset (0,k^{-1})}} |B|
\approx \sum_{k=0}^\infty \frac{1}{k} = \infty.
\]
\end{example}
Next, we will provide a family of examples that show that $BV(Z)$ is, in
general, a strictly smaller space than $B_{1,1}^\theta(Z)$ for every
$\theta\in[0,1)$.
\begin{example}
Let $\alpha \in (\theta, 1)$. Then, the Weierstrass function
\[
  u_\alpha (x) = \sum_{k=1}^\infty \frac{\cos(2^k \pi x)}{2^{k\alpha}}\,, \quad x\in[0,1],
\]
is $\alpha$-H\"older continuous but nowhere differentiable in $[0,1]$ by
Hardy~\cite{Har}. Hence, $u_\alpha \notin BV[0,1]$ as it would have been
differentiable a.e.\@ otherwise. Since $\alpha > \theta$, we have
$\ccal^{0,\alpha}[0,1] \subset B_{1,1}^\theta[0,1]$ by~\cite[Lemma~6.2]{GKS}.
\end{example}
In conclusion, we have now proved the following theorem.
\begin{theorem}
Let $\tau\ge 1$ and $\theta \in (0,1]$ be arbitrary. Then,
\[
  L^1(Z) = A_{1,\tau}^0(Z) \supset B_{1,1}^0(Z) \supset A_{1,\tau}^\theta (Z) \supset A_{1,\tau}^1 (Z) \subset BV(Z),
\]
where all but the last of the inclusions are strict in general. Furthermore,
Lipschitz functions on $Z$ belong to $\Be^0(Z)$.
\end{theorem}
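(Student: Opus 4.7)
The proof is primarily an assembly of the results established earlier in this section, with one substantive new argument required. The four inclusions come from prior statements as follows. The equality $L^1(Z)=A^0_{1,\tau}(Z)$ is the lemma preceding Example~\ref{exa:Bes0!=L1}; the inclusion $A^0_{1,\tau}(Z)\supset B^0_{1,1}(Z)$ is the first proposition of this subsection specialized to $\theta=0$; the inclusion $B^0_{1,1}(Z)\supset A^\theta_{1,\tau}(Z)$ is the second proposition with its roles of $(\theta,\eta)$ played by $(0,\theta)$; and $A^\theta_{1,\tau}(Z)\supset A^1_{1,\tau}(Z)$ is the remark comparing $A^\theta$ with $A^\eta$ at $\eta=1$.

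Strictness of the first inclusion is Example~\ref{exa:Bes0!=L1}. Strictness of the remaining two inclusions is witnessed by the Weierstrass-type H\"older functions: choosing $\alpha\in(\theta,1)$ produces, via the H\"older estimate and Lemma~\ref{lem:BesovEquivFixedBalls}, a function in $A^\theta_{1,\tau}\setminus BV$, which \emph{a fortiori} lies outside $A^1_{1,\tau}\subset BV$; choosing $\alpha\in(0,\theta)$ and invoking Lemma~6.2 of \cite{GKS} yields a function in $B^0_{1,1}$ whose ball oscillations produce a divergent $\rad(B)^{-\theta}$-weighted sum and so escape $A^\theta_{1,\tau}$. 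The Lipschitz containment $\LIP(Z)\subset\Be^0(Z)$ is immediate: if $u$ is $L$-Lipschitz then $|u(y)-u(x)|\le Lt$ whenever $y\in B(x,t)$, hence
\[
\int_0^R\!\int_Z\fint_{B(x,t)}|u(y)-u(x)|\,d\nu(y)\,d\nu(x)\,\frac{dt}{t}\le LR\,\nu(Z)<\infty.
\]

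The one new step is the inclusion $A^1_{1,\tau}(Z)\subset BV(Z)$. My plan is to construct locally Lipschitz approximations of $u$ and invoke the definition of $\|Du\|$. For each $\epsilon>0$, apply Lemma~\ref{cor:ballcovering} to produce a cover $\{B_i=B(x_i,\epsilon)\}$ of $Z$ with bounded overlap after dilation, and a subordinate Lipschitz partition of unity $\{\phi_i\}$ with $\Lip\phi_i\le C/\epsilon$. Set $u_\epsilon:=\sum_i u_{B_i}\phi_i$. A telescoping computation identical in spirit to that in Proposition~\ref{prop:extnBounds} shows, for $x\in B_j$,
\[
\Lip u_\epsilon(x)\le\frac{C}{\epsilon}\sum_{i:\,2B_i\cap B_j\ne\emptyset}|u_{B_i}-u_{B_j}|\le\frac{C}{\epsilon}\fint_{4B_j}\fint_{4B_j}|u(y)-u(z)|\,d\nu(y)\,d\nu(z).
\]
Integrating over $Z$ and collecting terms, the right-hand side reduces to a constant times $\sum_i \rad(B_i)^{-1}\int_{4B_i}|u-u_{4B_i}|\,d\nu$. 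The main obstacle is the disjointness bookkeeping: the $\tau$-dilations of $\{B_i\}$ need not be pairwise disjoint, so one splits the collection into a bounded number of subfamilies whose $\tau$-dilations \emph{are} pairwise disjoint (possible by the doubling property of $\nu$) and applies the supremum defining the $A^1_{1,\tau}$-norm on each subfamily. This yields $\int_Z\Lip u_\epsilon\,d\nu\le C\|u\|_{A^1_{1,\tau}(Z)}$ uniformly in $\epsilon$. Since $u_\epsilon\to u$ in $L^1(Z)$ by Lebesgue differentiation of the doubling measure $\nu$, the definition of $\|Du\|(Z)$ as the infimum over locally Lipschitz approximations closes the argument and gives $\|u\|_{BV(Z)}\le C\|u\|_{A^1_{1,\tau}(Z)}$.
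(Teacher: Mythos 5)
Your assembly of the four inclusions from the preceding lemmas, propositions, and the remark is exactly how the paper obtains them, and your strictness witnesses (Example~\ref{exa:Bes0!=L1} for the first inclusion, the Weierstrass functions for the rest) coincide with the paper's; note that for the strictness of $B^0_{1,1}(Z)\supsetneq A^\theta_{1,\tau}(Z)$ both you and the paper really only assert, rather than verify, that a Weierstrass function of exponent $\alpha<\theta$ has ball oscillations bounded \emph{below} by $\rad(B)^\alpha$, which is the nontrivial ingredient needed for the divergence of the $\rad(B)^{-\theta}$-weighted sum. The one place you genuinely diverge from the paper is the inclusion $A^1_{1,\tau}(Z)\subset BV(Z)$: the paper does not prove this at all but simply cites \cite[Theorem~1.1]{HKT} (and notes equality under a $1$-Poincar\'e inequality via \cite[Corollary~1.3]{HKT}), whereas you sketch a self-contained proof by discrete convolution at scale $\eps$ against a Lipschitz partition of unity built from Lemma~\ref{cor:ballcovering}, with the telescoping estimate for $\Lip u_\eps$, the splitting into boundedly many subfamilies with disjoint $\tau$-dilations, and the passage to $\|Du\|(Z)$ via the $L^1$-convergence $u_\eps\to u$. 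That argument is sound (it is essentially the proof in \cite{HKT}) and has the merit of making the section self-contained and of exhibiting the constant $C$ in $\|u\|_{BV(Z)}\le C\|u\|_{A^1_{1,\tau}(Z)}$ explicitly in terms of the doubling constant and $\tau$; the paper's citation is shorter and also delivers the converse inclusion under a Poincar\'e inequality, which your construction does not address. Your direct verification that Lipschitz functions lie in $\Be^0(Z)$ is correct and marginally more elementary than routing through the H\"older embedding of \cite{GKS}.
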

We know from~\cite[Theorem~1.1]{HKT} that $A_{1,\tau}^1(Z)\subset BV(Z)$. Note
however that $A_{1,\tau}^1 (Z) = BV(Z)$ holds by~\cite[Corollary~1.3]{HKT}
whenever $Z$ supports a $1$-Poincar\'e inequality.
\section{Extension theorem for $L^1$ boundary data: proof of Theorem~\ref{thm:main2}}\label{sec:l1-extension}
Given an $L^1$-function on $\dOm$, we will construct its BV extension in $\Om$
using the linear extension operator for $B_{1,1}^0(\dOm)$ boundary data.
Observe however that the mapping $f\in L^1(\dOm) \mapsto F\in BV(\Om)$ will be
nonlinear, which is not surprising in view of \cite{Pee}.

Instead of constructing the extension using a Whitney decomposition of
$\Omega$, we will set up a sequence of layers inside $\Om$ whose widths depend
not only on their distance from $X\setminus \Om$, but also on the function
itself (more accurately, on the choice of the sequence of Lipschitz
approximations of the function in $L^1$-class). Using a partition of unity
subordinate to these layers, we will glue together BV extensions (from
Theorem~\ref{thm:main1}) of Lipschitz functions on $\dOm$ that approximate the
boundary data in $L^1(\dOm)$. Roughly speaking, the closer the layer lies to
$X\setminus \Om$, the better we need the approximating Lipschitz data to be.
The core idea of such a construction can be traced back to Gagliardo~\cite{Gag}
who discussed extending $L^1(\RR^{n-1})$ functions to $W^{1,1}(\RR^n_+)$.

First, we approximate $f$ in $L^1(\dOm)$ by a sequence of Lipschitz continuous
functions $\{f_k\}_{k=1}^\infty$ such that $\|f_{k+1} - f_k\|_{L^1(\dOm)} \le
2^{2-k} \|f\|_{L^1(\dOm)}$. Note that this requirement of rate of convergence
of $f_k$ to $f$ also ensures that $f_k\to f$ pointwise $\mathcal{H}$-a.e.\@ in
$\dOm$. For technical reasons, we choose $f_1 \equiv 0$.

Next, we choose a decreasing sequence of real numbers $\{\rho_k\}_{k=1}^\infty$
such that:
\begin{itemize}
  \setlength{\parskip}{0pt}
  \setlength{\itemsep}{2pt plus 1pt minus 2pt}
	\item $\rho_1 \le \diam(\Omega)/2$;
  \item $0<\rho_{k+1} \le \rho_k /2$;
  \item $\sum_k \rho_k  \LIP(f_{k+1}, \dOm) \le C \|f\|_{L^1(\dOm)}$.
\end{itemize}
These will now be used to define layers in $\Omega$. Let
\[
  \psi_k (x) = \max\biggl\{ 0, \min\biggl\{1, \frac{\rho_k - \dist(x, X\setminus \Om)}{\rho_k - \rho_{k+1}} \biggr\} \biggr\},
  \quad x \in \Om.
\]
Then, the sequence of functions $\{\psi_{k-1} - \psi_k: k=2,3,\ldots\}$ serves
as a partition of unity in $\Om(0, \rho_2)$ subordinate to the system of layers
given by $\{ \Om(\rho_{k+1}, \rho_{k-1}): k=2,3,\ldots\}$.

Recall that Lipschitz continuous functions lie in the Besov class $B_{1,1}^0$.
Thus, we can apply the linear extension operator $E: B_{1,1}^0(\dOm) \to
BV(\Om)$, whose properties were established in
Section~\ref{sec:B110-extension}, to define the extension of $f \in L^1(\dOm)$
by extending its Lipschitz approximations in layers, i.e.,
\begin{align}
  \notag
  F(x) & := \sum_{k=2}^\infty \bigl(\psi_{k-1}(x) - \psi_{k}(x)\bigr) Ef_k(x) \\
  & \phantom{:}= \sum_{k=1}^\infty \psi_k(x) \bigl(Ef_{k+1}(x) - Ef_{k}(x)\bigr), \quad x\in\Om.
  \label{eq:L1-ext}
\end{align}

The following result shows that the above extension is in the class $BV(\Om)$
with appropriate norm bounds (see Remark~\ref{rem:lip-upp2}). Indeed, we will
see that the extension given below lies in $N^{1,1}(\Om)\subset BV(\Om)$.
\begin{proposition}\label{prop:trace-L1}
Given $f \in L^1(\dOm)$, the extension defined by \eqref{eq:L1-ext} satisfies
\begin{align*}
  \| F\|_{L^1(\Omega)} & \le C \diam(\Omega) \|f\|_{L^1(\dOm)} \quad\mbox{and}\\
  \|\Lip F\|_{L^1(\Omega)} & \le C (1+\hcal(\partial\Om)) \|f\|_{L^1(\dOm)}.
\end{align*}
\end{proposition}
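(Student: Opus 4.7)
My plan is to exploit linearity of $E$ — so that $g_k := Ef_{k+1} - Ef_k = E(f_{k+1} - f_k)$ is itself the $\Be^0$-extension of a Lipschitz boundary function — together with the two layer estimates already proved: Corollary~\ref{layer-est-Fn} for the $L^1$-size of $E$-extensions on layers, and Corollary~\ref{cor:layer-est-grad-lip} for the $L^1$-size of their pointwise Lipschitz constants when the boundary data are Lipschitz. At each $x\in\Om$ only finitely many $\psi_k(x)$ are nonzero (those indices with $\dist(x, X\setminus\Om) < \rho_k$, and $\rho_k\downarrow 0$), so the series $F = \sum_k \psi_k g_k$ is locally a finite sum and $F$ is locally Lipschitz in $\Om$; in particular $\Lip F$ is well defined, and the pointwise product rule $\Lip(\psi_k g_k) \le \psi_k\Lip g_k + |g_k|\Lip\psi_k$ combined with subadditivity of $\Lip$ may be applied termwise.

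The $L^1$-bound is the easier half. Using $0\le\psi_k\le\chi_{\Om(0,\rho_k)}$ and Corollary~\ref{layer-est-Fn} applied to $g_k$,
\[
  \int_\Om |F|\,d\mu \;\le\; \sum_{k=1}^\infty \int_{\Om(0,\rho_k)}|g_k|\,d\mu \;\le\; C\sum_{k=1}^\infty \rho_k\|f_{k+1}-f_k\|_{L^1(\dOm)}.
\]
Bounding $\rho_k \le \rho_1 \le \diam(\Om)/2$ and $\|f_{k+1}-f_k\|_{L^1(\dOm)} \le 2^{2-k}\|f\|_{L^1(\dOm)}$ makes the series sum to at most $C\diam(\Om)\|f\|_{L^1(\dOm)}$.

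For the gradient bound, the subadditivity/product-rule combination gives
\[
  \int_\Om\Lip F\,d\mu \;\le\; \underbrace{\sum_k\int_\Om\psi_k\,\Lip g_k\,d\mu}_{\mathrm{(I)}} \;+\; \underbrace{\sum_k\int_\Om|g_k|\,\Lip\psi_k\,d\mu}_{\mathrm{(II)}}.
\]
For (I), since $\psi_k\le\chi_{\Om(0,\rho_k)}$ and $f_{k+1}-f_k$ is Lipschitz on $\dOm$, Corollary~\ref{cor:layer-est-grad-lip} yields $\int_\Om \psi_k\Lip g_k\,d\mu \le C\rho_k\hcal(\dOm)\LIP(f_{k+1}-f_k,\dOm)$. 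The triangle inequality $\LIP(f_{k+1}-f_k,\dOm)\le\LIP(f_{k+1},\dOm)+\LIP(f_k,\dOm)$, the choice $f_1\equiv 0$, and $\rho_k\le\rho_{k-1}$ for $k\ge 2$ re-index both resulting sums into $\sum_j \rho_j\LIP(f_{j+1},\dOm)$, which is bounded by $C\|f\|_{L^1(\dOm)}$ by the third defining property of $\{\rho_k\}$; hence (I) $\le C\hcal(\dOm)\|f\|_{L^1(\dOm)}$. For (II), $\Lip\psi_k \le 2/(\rho_k-\rho_{k+1}) \le 4/\rho_k$ and $\Lip\psi_k$ is supported in $\Om(\rho_{k+1},\rho_k)$, so Corollary~\ref{layer-est-Fn} applied to $g_k$ on that layer gives
\[
  \int_\Om |g_k|\Lip\psi_k\,d\mu \;\le\; \frac{4}{\rho_k}\cdot C\rho_k\|f_{k+1}-f_k\|_{L^1(\dOm)} \;=\; C\|f_{k+1}-f_k\|_{L^1(\dOm)}.
\]
The geometric decay of $\|f_{k+1}-f_k\|_{L^1(\dOm)}$ then gives (II) $\le C\|f\|_{L^1(\dOm)}$, and combining (I) and (II) yields $\|\Lip F\|_{L^1(\Om)}\le C(1+\hcal(\dOm))\|f\|_{L^1(\dOm)}$.

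The real subtlety is not any individual inequality but the coordination of the decay rates: the Leibniz split is arranged so that the $L^1$-decay $\|f_{k+1}-f_k\|_{L^1(\dOm)}\le 2^{2-k}\|f\|_{L^1(\dOm)}$ controls (II) — where the factor $1/\rho_k$ from $\Lip\psi_k$ must be absorbed by the $\rho_k$ coming out of the layer estimate — while the independently imposed Lipschitz summability $\sum_k\rho_k\LIP(f_{k+1},\dOm)\le C\|f\|_{L^1(\dOm)}$ controls (I). This is precisely the role the three conditions on $\{\rho_k\}$ were designed to play; the only mild bookkeeping is the re-indexing of $\sum_k\rho_k\LIP(f_k,\dOm)$ into $\sum_j\rho_j\LIP(f_{j+1},\dOm)$, which works thanks to $f_1\equiv 0$ and $\rho_{k+1}\le\rho_k/2$.
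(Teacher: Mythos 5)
Your proposal is correct and follows essentially the same route as the paper: exploit linearity of $E$ to write $F=\sum_k\psi_k\,E(f_{k+1}-f_k)$, apply Corollary~\ref{layer-est-Fn} on the layers $\Om(0,\rho_k)$ for the $L^1$ bound, and split $\Lip F$ by the product rule into a term controlled by Corollary~\ref{cor:layer-est-grad-lip} together with $\sum_k\rho_k\LIP(f_{k+1},\dOm)\le C\|f\|_{L^1(\dOm)}$ and a term where the factor $1/(\rho_k-\rho_{k+1})\approx 1/\rho_k$ from $\Lip\psi_k$ is absorbed by the $\rho_k$ from the layer estimate and summed via the geometric decay of $\|f_{k+1}-f_k\|_{L^1(\dOm)}$. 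Your explicit remarks on local finiteness of the sum and on re-indexing $\sum_k\rho_k\LIP(f_k,\dOm)$ using $f_1\equiv 0$ and $\rho_k\le\rho_{k-1}$ merely spell out details the paper leaves implicit.
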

\begin{proof}
Corollary~\ref{layer-est-Fn} allows us to obtain the desired $L^1$ estimate for
$F$. Since the extension on $\Be^0(\dOm)$ is linear, we have that
$Ef_{k+1}-Ef_{k}=E(f_{k+1}-f_{k})$. Therefore,
\begin{align*}
  \| F\|_{L^1(\Omega)}  &\le \sum_{k=1}^\infty \| \psi_k E(f_{k+1}-f_{k})\|_{L^1(\Omega)}
   \le \sum_{k=1}^\infty \| E(f_{k+1}-f_{k})\|_{L^1(\Omega(0, \rho_{k}))} \\
   \le &C \sum_{k=1}^\infty \rho_{k} \|f_{k+1}-f_{k}\|_{L^1(\dOm)} \le C \rho_1 \|f\|_{L^1(\dOm)}
    \le C \diam(\Om)  \|f\|_{L^1(\dOm)}\,.
\end{align*}
In order to obtain the $L^1$ estimate  for $\Lip F$, we first apply the product
rule for locally Lipschitz functions, which yields that
\begin{align*}
  \Lip F & = \sum_{k=1}^\infty\bigl( |E(f_{k+1}-f_{k})|\Lip\psi_k + \psi_k \Lip(E(f_{k+1}-f_{k})) \bigr)\\
  & \le \sum_{k=1}^\infty \biggl(\frac{|E(f_{k+1}-f_{k})| \chi_{\Omega(\rho_{k+1}, \rho_k)}}{\rho_k-\rho_{k+1}}
  + \chi_{\Omega(0, \rho_k)} \Lip(E(f_{k+1}-f_{k}))\biggr)
\end{align*}
It follows from Corollary~\ref{layer-est-Fn} that
\begin{align*}
\sum_{k=1}^\infty \biggl\| \frac{E(f_{k+1}-f_{k})}{\rho_k-\rho_{k+1}} \biggr\|_{L^1(\Omega(\rho_{k+1}, \rho_k))}
& \le C \sum_{k=1}^\infty  \frac{\rho_k}{\rho_k-\rho_{k+1}} \|f_{k+1}-f_{k}\|_{L^1(\dOm)} \\
& \le C \sum_{k=1}^\infty  \|f_{k+1}-f_{k}\|_{L^1(\dOm)} \le C \|f\|_{L^1(\dOm)}\,.
\end{align*}
Next, we apply Corollary~\ref{cor:layer-est-grad-lip} to see that
\begin{align*}
  \sum_{k=1}^\infty \bigl\| \Lip E(f_{k+1}-f_{k}) \bigr\|_{L^1(\Omega(0, \rho_k))}
    \le C & \sum_{k=1}^\infty \rho_k \hcal(\dOm) \LIP (f_{k+1}-f_{k},\dOm) \\
  \le C \hcal(\dOm) & \sum_{k=1}^\infty \rho_k \bigl(\LIP (f_{k+1},\dOm)+\LIP(f_{k},\dOm)\bigr) \\
  \le C \hcal(\dOm) & \|f\|_{L^1(\dOm)},
\end{align*}
where we used the defining properties of $\{\rho_k\}_{k=1}^\infty$ to obtain
the ultimate inequality. Altogether, $\|\Lip F\|_{L^1(\Om)} \le C (1+
\hcal(\dOm)) \|f\|_{L^1(\dOm)}$.
\end{proof}
\subsection{Trace of the extended functions}\label{subsec-L1Trace}
In this section we complete the proof of Theorem~\ref{thm:main2} by showing
that the trace of the extended function yields the original function back.
\begin{proposition}
Let $F \in BV(\Om)$ be the extension of $f\in L^1(\dOm)$ as constructed in
\eqref{eq:L1-ext}. Then,
\[
  \lim_{r\to 0} \fint_{B(z,r)\cap \Om} |F - f(z)| \,d\mu = 0
\]
for $\hcal$-a.e.\@ $z\in \dOm$.
\end{proposition}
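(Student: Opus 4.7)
The plan is to mirror the structure of the proof of Lemma~\ref{trace-Besov}, but instead of relating $F$ directly to $f$ via a single Besov extension, I would relate $F$ on an $r$-neighborhood of the boundary to the Besov extension $Ef_K$ of the Lipschitz approximant $f_K$ for a scale-dependent index $K = K(r)$, then exploit both $TEf_K(z) = f_K(z)$ (Lemma~\ref{trace-Besov}) and the pointwise convergence $f_K(z) \to f(z)$. Concretely, I would pick $z \in \dOm$ satisfying: (i) $z$ is a Lebesgue point of $f$ and of every tail $g_M := \sum_{k \ge M}|f_{k+1}-f_k|$, $M\in\NN$; (ii) $f_k(z) \to f(z)$; and (iii) $g_M(z) \to 0$ as $M\to\infty$. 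Each condition holds $\hcal$-a.e.: the doubling of $\hcal|_{\dOm}$ yields Lebesgue differentiation on $\dOm$, and $\sum_k (f_{k+1}-f_k)$ is absolutely convergent in $L^1(\dOm)$ because $\|f_{k+1}-f_k\|_{L^1(\dOm)} \le 2^{2-k}\|f\|_{L^1(\dOm)}$. Fix $\epsilon > 0$, choose $M$ so large that $g_M(z) < \epsilon$ and $|f_k(z) - f(z)| < \epsilon$ for all $k \ge M$. For $r \in (0, \rho_M)$, let $K = K(r) \ge M$ be the unique integer with $\rho_{K+1} \le r < \rho_K$.

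Because $f_1 \equiv 0$, telescoping gives $Ef_K = \sum_{k=1}^{K-1}E(f_{k+1}-f_k)$, and since $\psi_k \equiv 1$ on $\Om(0, \rho_{k+1})$ we have $\psi_k \equiv 1$ on $\Om(0, \rho_K) \supset B(z,r)\cap\Om$ for every $k \le K-1$. Hence, on $B(z,r)\cap\Om$, the formula~\eqref{eq:L1-ext} reduces to
\[
  F - Ef_K = \sum_{k \ge K} \psi_k\, E(f_{k+1}-f_k),
\]
which, together with the triangle inequality, yields the pointwise bound
\[
  |F - f(z)| \le \sum_{k \ge K} |E(f_{k+1} - f_k)| + |Ef_K - f_K(z)| + |f_K(z) - f(z)|.
\]
Integrating and applying Lemma~\ref{layer-est-Fn-boundaryball} with $\rho_1=0$, $\rho_2=r$ to each term of the form $\int_{B(z,r)\cap\Om}|Eg|\,d\mu \le Cr\int_{B(z,2^8r)\cap\dOm}|g|\,d\hcal$ (legal for the middle term because $E$ is linear and maps constants to constants, so $Ef_K - f_K(z) = E(f_K - f_K(z))$) produces
\[
  \int_{B(z,r)\cap\Om} |F - f(z)|\,d\mu \le Cr \int_{B(z,2^8r)\cap\dOm}\bigl(g_K + |f_K - f_K(z)|\bigr)\,d\hcal
  + \epsilon\,\mu(B(z,r)\cap\Om).
\]
Dividing by $\mu(B(z,r)\cap\Om)$ and using~\eqref{boundary-Ahlfors-regularity} to convert $r/\mu(B(z,r)\cap\Om)$ into $C/\hcal(B(z,2^8r)\cap\dOm)$, exactly as at the end of the proof of Lemma~\ref{trace-Besov}, turns the right-hand side into a sum of averages over $B(z,2^8r)\cap\dOm$ plus $\epsilon$.

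Since $K \ge M$, we have $g_K \le g_M$ pointwise, so the first resulting average is $\fint_{B(z,2^8r)\cap\dOm} g_K\,d\hcal \le \fint_{B(z,2^8r)\cap\dOm} g_M\,d\hcal \to g_M(z) < \epsilon$ as $r \to 0$ by the Lebesgue point property. The second average splits via $|f_K - f_K(z)| \le |f_K - f| + |f - f(z)| + |f_K(z) - f(z)|$: the first summand is pointwise dominated by $g_K \le g_M$ (because $f - f_K = \sum_{k\ge K}(f_{k+1}-f_k)$ a.e.), the second averages to zero by the Lebesgue point property of $f$, and the third is at most $\epsilon$. Altogether $\limsup_{r\to 0^+}\fint_{B(z,r)\cap\Om}|F - f(z)|\,d\mu \le C\epsilon$, and letting $\epsilon \to 0$ finishes the proof. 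The main obstacle is the coupling of the two scales: as $r\to 0$, the index $K(r)\to\infty$ and the Lipschitz approximant gets closer to $f$, yet we must test on a shrinking neighborhood of $z$, so one cannot simply invoke Lemma~\ref{trace-Besov} for a fixed approximant. The monotone envelope $g_M$ decouples the two: the bound $g_{K(r)} \le g_M$ for $r<\rho_M$ allows one to freeze $M$ independently of $r$ and then apply Lebesgue differentiation to the single $L^1$ function $g_M$.
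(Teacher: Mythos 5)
Your proof is correct, and it reaches the conclusion by a genuinely different mechanism from the paper's. The paper compares $F$ with a \emph{fixed} auxiliary function $F_j$ (which equals $Ef_j$ on $\Om(0,\rho_j)$, hence has trace $f_j(z)$ at $z$), and then controls the tail $\sum_{k\ge k_r}$ by exploiting the \emph{Lipschitz} constants of the approximants: it estimates $\int_{U_r}|f_k-f_j|\,d\hcal$ by $Cr\,\hcal(U_r)\LIP(f_k-f_j,U_r)+2\eps\,\hcal(U_r)$ and then sums using the engineered property $\sum_k\rho_k\LIP(f_{k+1},\dOm)\le C\|f\|_{L^1(\dOm)}$ of the layer widths. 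You instead compare $F$ with $Ef_{K(r)}$ for a scale-dependent index and control the tail purely through the $L^1$ rate $\|f_{k+1}-f_k\|_{L^1(\dOm)}\le 2^{2-k}\|f\|_{L^1(\dOm)}$, by introducing the summable envelopes $g_M=\sum_{k\ge M}|f_{k+1}-f_k|\in L^1(\dOm)$ and applying Lebesgue differentiation to them; the monotonicity $g_{K(r)}\le g_M$ for $r<\rho_M$ is exactly what lets you freeze $M$ while $K(r)\to\infty$, and since $K(r)$ moves with $r$ you correctly avoid invoking Lemma~\ref{trace-Besov} for a fixed approximant and instead handle $|Ef_K-f_K(z)|=|E(f_K-f_K(z))|$ directly via Lemma~\ref{layer-est-Fn-boundaryball} (legitimate, since $E$ is linear and fixes constants). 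Both arguments rest on Lemma~\ref{layer-est-Fn-boundaryball} and on the same implicit conversion $r/\mu(B(z,r)\cap\Om)\le C/\hcal(B(z,2^8r)\cap\dOm)$ that already appears at the end of Lemma~\ref{trace-Besov}, so no new hypotheses enter. What your route buys: the identity $T\circ\Ext=\mathrm{id}$ is seen not to depend on the Lipschitz-rate condition imposed on $\{\rho_k\}$ (which is then needed only for the norm bounds of Proposition~\ref{prop:trace-L1}), and the auxiliary sequence $\{F_n\}$ is dispensed with. What the paper's route buys: it needs Lebesgue points only of the continuous functions involved and of nothing like the tails $g_M$, relying instead on quantitative Lipschitz bounds already built into the construction.
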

\begin{proof}
Let $E_0$ be the collection of all $z\in\dOm$ for which $\lim_kf_k(z)=f(z)$,
and for $k\in\NN$ let $E_k$ be the collection of all $z\in\dOm$ for which
$TEf_k(z)=f_k(z)$ exists. Lemma~\ref{trace-Besov} yields that $\hcal(\dOm
\setminus \bigcap_{k=0}^\infty E_k) = 0$. We define also an auxiliary sequence
$\{F_n\}_{n=1}^\infty$ of functions approximating $F$ by
\[
  F_n = \sum_{k=2}^n (\psi_{k-1}-\psi_k) E{f_k} + \sum_{k=n+1}^\infty (\psi_{k-1}-\psi_k) E{f_n}, \quad n\in\NN.
\]
It can be shown that $F_n\to F$ in $BV(\Om)$, but we will not need this fact
here. Note that $F_n = Ef_n$ in $\Om(0, \rho_n)$ and hence the trace of $F_n$
exists on $\dOm$ and coincides with the trace of $Ef_n$, i.e., with $f_n$.

Fix a point $z \in \bigcap_{k=0}^\infty E_k$ and let $\eps>0$. Then, we can
find $j \in \NN$ such that $|f_k(z) - f(z)| < \eps$ for every $k\ge j$. Next,
we choose $k_0>j$ such that $R := \rho_{k_0}$ satisfies:
\begin{itemize}
  \setlength{\parskip}{0pt}
  \setlength{\itemsep}{2pt plus 1pt minus 2pt}
	\item $R \LIP(f_j, \dOm) < \eps$;
  \item $\fint_{B(z,r)} |F_j - f_j(z)|\,d\mu < \eps$ for every $r < R$;
  \item $\sum_{k=k_0}^\infty \rho_k \LIP(f_{k+1}, \dOm) < \eps$.
\end{itemize}
For every $r \in (0, \rho_{k_0+1}) \subset (0, R/2)$, we can then estimate
\begin{align}
  \notag
  \fint_{B(z,r)\cap\Om} &|F-f(z)|\,d\mu \\
  \notag
  & \le \fint_{B(z,r)\cap\Om} |F-F_j|\,d\mu + \fint_{B(z,r)\cap\Om} |F_j - f_j(z)|\,d\mu + |f_j(z) - f(z)| \\
  \label{eq:L1-trace}
  & \le \fint_{B(z,r)\cap\Om} |F-F_j|\,d\mu + 2\eps.
\end{align}
For such $r$, choose $k_r> k_0$ such that $\rho_{k_r+1} \le r < \rho_{k_r}$.
Then,
\begin{align}
  \notag
  \int_{B(z,r)\cap \Om} |F-F_j|\,d\mu & \le \sum_{k=k_r}^\infty \int_{B(z,r)\cap\Om} (\psi_{k-1} - \psi_k) \bigl|E(f_k - f_j)\bigr|\,d\mu \\
  \notag
 & \le \sum_{k=k_r}^\infty \int_{B(z,r) \cap \Om(\rho_{k+1}, \rho_{k-1})} \bigl|E(f_k - f_j)\bigr|\,d\mu \\
 \label{eq:L1-trace-A}
 & \le C \sum_{k=k_r}^\infty \min\{r,\rho_{k-1}\} \int_{B(z,2^8 r) \cap \dOm} |f_k - f_j|\,d\hcal
\end{align}
by Lemma~\ref{layer-est-Fn-boundaryball}. In the last inequality above, we used
the fact that when $k=k_r$, we must have $B(z,r)\cap \Om(\rho_{k_r+1},
\rho_{k_r-1}) = B(z,r)\cap\Om(\rho_{k_r+1},r)$ by the choice of $r<\rho_{k_r}$.

Let us, for the sake of brevity, write $U_r = B(z, 2^8 r) \cap \dOm$. As $f_k -
f_j$ is Lipschitz continuous, we have by the choice of $j$, and the fact that
$k\ge j$,
\begin{align}
  \notag
  \int_{U_r} |f_k - f_j|\,d\hcal
  & \le \int_{U_r} \bigl|f_k - f_j - (f_k(z) - f_j(z))\bigr|\,d\hcal + |f_k(z) - f_j(z)| \hcal(U_r) \\
 \label{eq:L1-trace-B}
  & \le Cr \hcal(U_r) \LIP(f_k - f_j, U_r)  + 2 \eps \hcal(U_r).
\end{align}
Observe that $r \hcal(U_r) \approx \mu(B(z, r))$ by
\eqref{boundary-Ahlfors-regularity}, and the doubling condition for $\mu$. Note
that $\sum_{k=k_r}^\infty\rho_{k-1}\le C\rho_{k_r-1}\le CR$. Combining this
with~\eqref{eq:L1-trace-A} and \eqref{eq:L1-trace-B} gives us that
\begin{align*}
  \int_{B(z,r)\cap \Om} |F& -F_j|\,d\mu \le \sum_{k=k_r}^\infty C \rho_{k-1} \mu(B(z,r)) \bigl(\LIP(f_k, \dOm) + \LIP(f_j, \dOm)\bigr) \\
 & \phantom{\le} + 2 \eps \mu(B(z,r)) \sum_{k=k_r}^\infty \frac{\min\{r, \rho_{k-1}\}}{r} \\
 &\le C \mu(B(z,r)) \biggl(\sum_{k=k_0}^\infty \bigl(\rho_k \LIP(f_{k+1}, \dOm)\bigr) +  R \LIP(f_{j}, \dOm) + \eps\biggr)\\
 & \le C \mu(B(z,r)) \eps.
\end{align*}
Plugging this estimate into~\eqref{eq:L1-trace} completes the proof.
\end{proof}

\subsection{Summary and further discussion}
In conclusion, we have shown that every function in $L^1(\dOm)$ has an
extension to $BV(\Om)$ in such a way that the trace of the extension returns
the original function. This extension is nonlinear, but it is bounded. In a
preceding section we demonstrated that there is a bounded linear extension from
the subclass $\Be^0(\dOm)$ to $BV(\Om)$. Note that $\Be^0(\dOm)$, containing
all the Lipschitz functions on $\dOm$, must necessarily be dense in
$L^1(\dOm)$. It therefore follows that this extension from $L^1(\dOm)$ to
$BV(\Om)$ cannot be continuous on $L^1(\dOm)$ since if it were, then the
extension from $L^1(\dOm)$ would be bounded and linear--and this is not
possible (see~\cite{Pee} for the fact that in general any extension from
$L^1(\dOm)$ to $BV(\Om)$ cannot be both bounded and linear).

On the other hand, in the setting of Corollary~\ref{cor:TrBV=L1}, using this
corollary we see that the trace operator $T$ of~\cite{LS} is a continuous
\emph{surjective} linear mapping of the Banach space $BV(\Om)$ to the Banach
space $L^1(\dOm)$, and hence there exists a continuous (non-linear) right
inverse of $T$ by the Bartle--Graves theorem~\cite[Corollary~7.1]{BP}. Should
we know that $\Om$ is a domain for which each function $f\in L^1(\partial\Om)$
has an associated function $u_f\in BV(\Om)$ such that $u_f$ is of least
gradient (that is, $1$-harmonic) in $\Om$ and with trace $Tu=f$
$\mathcal{H}$-a.e.~in $\partial\Om$, then the natural continuous inverse map
would be the map $f\mapsto u_f$; the stability results of~\cite{HKLS} would
indicate that this map is continuous. However, even under the best of
circumstances, for example $\Om$ the unit disk in $\R^2$, no $u_f$ exists for
general $f\in L^1(\partial\Om)$ (see~\cite{ST}). It is not clear what the
Bartle-Graves inverse map is.
\noindent Address:

\noindent L.\@ Mal\'{y}:
Department of Mathematical Sciences,
         University of Cincinnati,
         Cincinnati, OH 45221-0025,
         U.S.A.\\
\noindent {\tt malyls@ucmail.uc.edu}

\vskip 6pt plus 2pt minus 6pt
\noindent N.\@ Shanmugalingam:
Department of Mathematical Sciences,
         University of Cincinnati,
         Cincinnati, OH 45221-0025,
         U.S.A.\\
\noindent {\tt shanmun@ucmail.uc.edu}

\vskip 6pt plus 2pt minus 6pt
\noindent M.\@ Snipes:
Department of Mathematics,
         Kenyon College,
         Gambier, OH~43022,
         U.S.A.\\
\noindent {\tt snipesm@kenyon.edu}
\end{document}